\newcommand{\id}{\operatorname{id}}
\newcommand{\supp}{\operatorname{supp}}
\newcommand{\diam}{\operatorname{diam}}
\newcommand{\St}{\operatorname{St}}
   \theoremstyle{plain}%default
   \newtheorem{thm}{Theorem}[section]
   \newtheorem{lem}[thm]{Lemma}
   \newtheorem{cor}[thm]{Corollary}
   \theoremstyle{definition}
   \newtheorem{defn}[thm]{Definition}
   \theoremstyle{remark}
 \numberwithin{equation}{section}
\author{V. Manuilov}
\date{}
\address{Moscow Center for Fundamental and Applied Mathematics, Moscow State University,
Leninskie Gory 1, Moscow, 
119991, Russia}
\email{manuilov@mech.math.msu.su}
\thanks{Supported by the RSF grant 23-21-00068.}
\title {A continuous field of Roe algebras}
\begin{document}

\begin{abstract}
Let $X$ be a metric measure space. A Delone subset $D\subset X$ is a uniformly discrete set coarsely equivalent to $X$. We consider the space $\mathcal D_F$ of controlled Delone subsets of $X$ with an appropriate metric, and show that it, together with $X$ itself, is a compact space. By assigning to each point $D$ of $\mathcal D_F$ (resp., to $X$) the uniform Roe algebra $C^*_u(D)$ (resp., the \u Spakula's version $C_k^*(X)$ of the Roe algebra of $X$) we get a tautological family of $C^*$-algebras. For a sequence $\{D_n\}_{n\in\mathbb N}$ of controlled Delone subsets convergent to $X$ we show that the corresponding uniform Roe algebras $C^*_u(D_n)$, together with $C^*_k(X)$, form a continuous field of $C^*$-algebras over $\mathbb N\cup\{\infty\}$ when $X$ is a proper metric measure space of bounded geometry with no isolated points.

\end{abstract}

\maketitle

\section*{Introduction}

Let $X$ be a metric measure space. A Delone subset $D\subset X$ is a uniformly discrete set coarsely equivalent to $X$. We consider the space $\mathcal D_F$ of controlled (see Definition \ref{controlled}) Delone subsets of $X$ with an appropriate metric, and show that it, together with $X$ itself, is a compact space. By assigning to each point $D$ of $\mathcal D_F$ (resp., to $X$) the uniform Roe algebra $C^*_u(D)$ (resp., the \u Spakula's version $C_k^*(X)$ of the Roe algebra of $X$) we get a tautological family of $C^*$-algebras. For a sequence $\{D_n\}_{n\in\mathbb N}$ of controlled Delone subsets convergent to $X$ we show that the corresponding uniform Roe algebras $C^*_u(D_n)$, together with $C^*_k(X)$, form a continuous field of $C^*$-algebras over $\mathbb N\cup\{\infty\}$ when $X$ is a proper metric measure space of bounded geometry with no isolated points. The case when $X$ was a simplicial complex and the Delone subsets were of special form was considered in \cite{Lobachevskii}. It was erroneously claimed in \cite{Lobachevskii} that the fiber over $X$ is the Roe algebra $C^*(X)$. Here we fix this error by replacing $C^*(X)$ by $C^*_k(X)$and generalize \cite{Lobachevskii} to more general metric spaces.

The key role in the construction of this continuous field of $C^*$-algebras is played by partitions of unity and by partitions of $X$ of special form related to discretizations of $X$.

In Section \ref{S1} we define controlled Delone sets and prove compactness of the space of controlled Delone subsets of $X$ with $X$ added. In Section \ref{S2} we provide basic information about various versions of Roe algebras. In Section \ref{S3} we construct partitions of unity of a special form. In Section \ref{S4} we show that these partitions of unity can be orthogonalized in a way that allows to construct maps $\alpha^D:C^*_u(D)\to C^*(X)$ and backwards. In Section \ref{S5} we construct a partition of $X$ generalizing Voronoi cells. In Section \ref{S6} we show that the union of ranges of $\alpha^D$, $D\in D_F$, is dense in $C^*_k(X)$. In Section \ref{S7} we prove our main result that the constructed family of Roe algebras is a continuous field of $C^*$-algebras.

\section{Space of controlled Delone sets}\label{S1}

Let $(X,d)$ be a metric space. 
A discrete subset $D\subset X$ is {\it $r$-discrete} if $d(x,y)\geq r$ for any $x,y\in D$, $x\neq y$. It is called {\it $R$-dense} if for any $x\in X$ there exists $y\in D$ with $d(x,y)\leq R$. If $D$ is both $r$-discrete and $R$-dense then it is called an {\it $(r,R)$-Delone} set. It is a {\it Delone} set if it is an $(r,R)$-Delone set for some $r,R>0$. Given a Delone set $D\subset X$, we write $r(D)=\inf_{x,y\in D,x\neq y}d(x,y)$ and $R(D)=\inf\{R: D \mbox{\ is\ }R\mbox{-dense}\}$.

By Zorn Lemma, Delone sets exist in any metric space $X$.

\begin{defn}\label{controlled}
A monotone continuous function $F:[0,1]\to[0,1]$ such that $F(0)=0$ is called a {\it control function}. A Delone set is {\it controlled} (by $F$) if $R(D)\geq R$ implies $r(D)\geq F(R)$.

\end{defn}

It makes sense to consider only such control functions $F$ that for any $0<R\leq 1$ there exists a Delone set $D$ with $R(D)\leq R$ and $r(D)\geq F(R)$.  

Let $\mathcal D_F$ denote the set of all Delone sets $D$ with $R(D)\leq 1$ that are controlled by a function $F$ satisfying (c1) and (c2). Clearly, if $F_1(t)\leq F_2(t)$ for any $t\in[0,\infty)$ then $\mathcal D_{F_2}\subset\mathcal D_{F_1}$. Set $\overline{\mathcal D}_F=\mathcal D_F\cup\{X\}$. 

The sets in $\overline{\mathcal D}_F$ are closed, and we can use the Chabauty metric on this subset of the set of all closed subsets of $X$. Various versions of this metric are scattered in the literature. We use the metric from \cite{Smilansky-Solomon}. Note that this metric is defined for the case $X=\mathbb R^n$ in \cite{Smilansky-Solomon}, but their proof of the triangle inequality works for any metric space. Let $x_0\in X$ be a fixed point, $B_r(x_0)$ the open ball of radius $r$ centered at $x_0$, and let $A^{(+\varepsilon)}$ denote the $\varepsilon$-neighborhood of the set $A\subset X$. The metric $\rho$ on $\overline{\mathcal D}_F$ is defined by 
$$
\rho(D_1,D_2)=\inf\left(\left\{\varepsilon>0:\begin{array}{cc}D_1\cap B_{\frac{1}{\varepsilon}}(x_0)\subset D_2^{(+\varepsilon)}\\D_2\cap B_{\frac{1}{\varepsilon}}(x_0)\subset D_1^{(+\varepsilon)} \end{array}\right\}\cup\{1\}\right).
$$

\begin{thm}
If $X$ is proper then $\overline{\mathcal D}_F$ is compact.

\end{thm}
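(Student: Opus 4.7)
The plan is to prove sequential compactness of $\overline{\mathcal D}_F$ by a diagonal extraction argument that exploits the properness of $X$. Given a sequence $\{D_n\}\subset\overline{\mathcal D}_F$, let $K_m=\overline{B_m(x_0)}$, which is compact by properness. For each fixed $m$, the collection of closed subsets of $K_m$ is compact under the Hausdorff metric, so $\{D_n\cap K_m\}_n$ has a Hausdorff-convergent subsequence. A standard diagonal argument produces a single subsequence $\{D_{n_k}\}$ along which $D_{n_k}\cap K_m$ converges in Hausdorff distance to some closed $A_m\subset K_m$, for every $m$; moreover $A_m\subset A_{m+1}$ (inclusion passes to Hausdorff limits). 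Define $D=\overline{\bigcup_m A_m}\subset X$. The verification $\rho(D_{n_k},D)\to 0$ is routine: any $B_{1/\varepsilon}(x_0)$ sits inside some $K_m$, and the two Hausdorff approximations $D_{n_k}\cap K_m\subset A_m^{(+\varepsilon)}$ and $A_m\subset(D_{n_k}\cap K_m)^{(+\varepsilon)}$ directly yield the two inclusions defining $\rho$.

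Next, I would check that $D\in\overline{\mathcal D}_F$. Passing to a further subsequence, assume $R(D_{n_k})\to R_*\in[0,1]$. If $R_*=0$, I claim $D=X$: for any $x\in X$ and $\delta>0$, choose $k$ large enough that $R(D_{n_k})<\delta$ and $x$ lies in a ball on which $\rho$-convergence applies, pick $y_k\in D_{n_k}$ with $d(x,y_k)<\delta$, and find $z_k\in D$ with $d(y_k,z_k)<\delta$; hence $d(x,D)\leq 2\delta$, so $x\in D$. If $R_*>0$, then for each $\delta\in(0,R_*)$ and $k$ large the control hypothesis gives $r(D_{n_k})\geq F(R_*-\delta)$. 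The $r$-discreteness passes to $D$: two distinct points $x,y\in D$ with $d(x,y)<F(R_*-\delta)$ would, via $\rho$-convergence, lift to distinct $x_k,y_k\in D_{n_k}$ with $d(x_k,y_k)<r(D_{n_k})$, a contradiction. Continuity of $F$ then gives $r(D)\geq F(R_*)$ as $\delta\to 0$. Dually, for each $x\in X$, picking $y_k\in D_{n_k}$ within $R(D_{n_k})+\delta$ of $x$ and lifting to $D$ yields $R(D)\leq R_*\leq 1$. Monotonicity gives $r(D)\geq F(R_*)\geq F(R(D))$, so $D\in\mathcal D_F$.

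The main technical obstacle is the last step: preserving the control condition $r(D)\geq F(R(D))$ under the Chabauty limit. The two Delone parameters behave asymmetrically---the density $R$ can only decrease in the limit, while the discreteness $r$ is only lower semicontinuous---and it is precisely the monotonicity and continuity of the control function $F$ that reconcile these one-sided behaviors into the required two-sided inequality. The remaining subtleties (compatibility of the $A_m$, nonemptiness of $D$---which follows because $R(D_n)\leq 1$ forces some point of $D_n$ into the compact set $K_1$---and the translation between Hausdorff and $\rho$ convergence) are routine metric-space bookkeeping.
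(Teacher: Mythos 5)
Your proposal is correct, but it reaches compactness by a genuinely different route. The paper argues in two steps: it first shows the hyperspace of \emph{all} closed subsets of $X$ is totally bounded for $\rho$ (by taking all subsets of a finite $\varepsilon$-net of $\overline{B}_{1/\varepsilon}(x_0)$ as a finite $\varepsilon$-net of sets), and then shows $\overline{\mathcal D}_F$ is $\rho$-closed, splitting on whether the limit set equals $X$. You instead prove sequential compactness directly: a diagonal extraction over the compact exhaustion $K_m=\overline{B_m(x_0)}$, Hausdorff compactness of closed subsets of each $K_m$, and an explicit construction of the limit $D=\overline{\bigcup_m A_m}$, followed by the dichotomy $R_*=0$ (limit is $X$) versus $R_*>0$ (limit is a controlled Delone set). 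Your construction buys something real: total boundedness plus closedness yields compactness only if one also knows that $\rho$-Cauchy sequences of closed sets actually converge to a closed set (completeness of the Chabauty hyperspace), a point the paper leaves implicit; your diagonal argument supplies exactly that missing limit object. The final verification that the limit respects the control condition is essentially the same in both proofs --- the monotonicity and continuity of $F$ reconciling the opposite one-sided semicontinuities of $r$ and $R$ --- and your version inherits the same harmless implicit assumption as the paper's, namely that $F(t)>0$ for $t>0$ (otherwise neither argument yields uniform discreteness of the limit). The boundary bookkeeping you defer (passing between local Hausdorff convergence on the $K_m$ and the metric $\rho$, and the compatibility $A_j\cap \operatorname{int}K_m\subset A_m$) is indeed routine.
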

\begin{proof}
First we show that the space of all closed subsets of a proper metric space $X$ is precompact, and then we show that $\overline{\mathcal D}_F$ is its closed subset. 

Precompactness is equivalent to existence of finite $\varepsilon$-dense subset for any $\varepsilon$. Fix $\varepsilon>0$. As $X$ is proper, $\overline{B}_{\frac{1}{\varepsilon}}(x_0)$ is compact, so there exists a finite $\varepsilon$-dense subset $A$ in $\overline{B}_{\frac{1}{\varepsilon}}(x_0)$. Let $\mathcal A$ be the finite set of all subsets of $A$. We claim that $\mathcal A$ is $\varepsilon$-dense in $\mathcal D_F$. Let $D\in\mathcal D_F$. Set $A_i=A\cap D^{(+\varepsilon)}$. Clearly, $A_i\subset D^{(+\varepsilon)}$. Let $x\in D$. As $A$ is $\varepsilon$-dense, there exists $a\in A$ such that $d(x,a)<\varepsilon$. Clearly, $a\in A_i$. As $d(x,a)<\varepsilon$, $x\in A_i^{(+\varepsilon)}$, hence $D\cap B_{\frac{1}{\varepsilon}}(x_0)\subset A_i^{(+\varepsilon)}$, hence $\rho(D,A_i)\leq\varepsilon$.

To show that $\overline{\mathcal D}_F$ is closed, consider a sequence $D_n\in\mathcal D_F$, $n\in\mathbb N$, such that it converges to some closed subset $E\subset X$, and suppose that $E\neq X$. Let $\rho(X,E)=2\alpha\neq 0$, then we may assume that $\rho(X,D_n)>\alpha$ for any $n\in\mathbb N$. 

Then for any $\beta<\alpha$ the inclusion $B_{\frac{1}{\beta}}(x_0)\subset D_n^{(+\beta)}$ does not hold, hence there exists $x_n\in X$ such that $d(x_n,D_n)\geq\beta$. This means that $R(D_n)\geq\beta$, hence $R(D_n)\geq\alpha$. Then $r(D_n)\geq F(\alpha)>0$. We claim that $E$ is $F(\alpha)$-discrete. Suppose the contrary, then there are $x,y\in E$ with $d(x,y)<R(\alpha)$. Let $d(x_0,x),d(x_0,y)<\frac{1}{\gamma}$, and let $\delta<\gamma$. There exists $n\in\mathbb N$ such that $\rho(E,D_n)<\delta$, which means that $E\cap B_{\frac{1}{\delta}}(x_0)\subset D_n^{(+\delta)}$, i.e. $x,y\in D_n^{(+\delta)}$, i.e. $d(x,y)\geq F(\alpha)-2\delta$. As $\delta>0$ was arbitrary, $E$ is $F(\alpha)$-discrete. Let $x\in X$. As $d(x,D_n)\leq R(D_n)\leq 1$ and as for each $\varepsilon>0$ there exists $n\in\mathbb N$ such that $D_n\cap B_{\frac{1}{\varepsilon}}\subset E^{(+\varepsilon)}$, we have $d(x,E)\leq 1+\varepsilon$. As $\varepsilon$ was arbitrary, we have $d(x,E)\leq 1$, and as $x\in X$ was arbitrary, we conclude that $E$ is 1-dense. 

Finally, we have to check that $E\in \mathcal D_F$. 
For $\delta>0$ find $x\in X$ such that $d(x,E)>R(E)-\delta$, and let $\varepsilon$ satisfy $x\in B_{\frac{1}{\varepsilon}}(x_0)$ and $\varepsilon\leq\delta$. 
Then find $n\in\mathbb N$ such that 
\begin{equation}\label{e1}
D_n\cap B_{\frac{1}{\varepsilon}}(x_0)\subset E^{(+\varepsilon)}
\end{equation}
and 
\begin{equation}\label{e2}
E\cap B_{\frac{1}{\varepsilon}}(x_0)\subset D_n^{(+\varepsilon)}. 
\end{equation}
It follows from (\ref{e2}) that if $x,y\in E$ then, for sufficiently small $\varepsilon$ we have $d(x,D_n)\leq\varepsilon$, $d(y,D_n\leq\varepsilon)$, hence $d(x,y)\geq r(D_n)-2\varepsilon$, i.e. 
\begin{equation}\label{e3}
r(E)\geq r(D_n)-2\varepsilon\geq r(D_n)-2\delta.
\end{equation} 

It follows from (\ref{e1}) that 
\begin{equation}\label{e4}
R(D_n)\geq d(x,D_n)>d(x,E)-\varepsilon>R(E)-\delta-\varepsilon\geq R(E)-2\delta.
\end{equation} 

As $F$ is monotone, we have $F(R(D_n))>F(R(E)-2\delta)$. Continuity of $F$ implies that for any $\varepsilon'>0$ there exists $\delta>0$ such that $F(R(E)-2\delta)>F(R(E))-\varepsilon'$. Then it follows from (\ref{e4}) that
\begin{equation}\label{e5}
F(R(E))<F(R(D_n))+\varepsilon'\leq r(D_n)+\varepsilon',
\end{equation}
and from (\ref{e3}) and (\ref{e5}) we have that
\begin{equation}\label{e6}
F(R(E))<r(D_n)+\varepsilon'\leq r(E)+\varepsilon'+2\varepsilon.
\end{equation}
Passing in (\ref{e6}) to the limit as $\varepsilon'\to 0$, we get
$$
F(R(E))\leq r(E).
$$
Thus, $E\in\mathcal D_F$.
\end{proof}

\begin{lem}\label{r-R}
Let $X$ has no isolated points, let $D_n\in\mathcal D_F$, $n\in\mathbb N$, and let $\lim_{n\to\infty}D_n=X$. Then $\lim_{n\to\infty}R(D_n)=0$.

\end{lem}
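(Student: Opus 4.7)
The plan is to argue by contradiction. Suppose $R(D_n)\not\to 0$. Then, after passing to a subsequence (which I still denote by $D_n$), there exists $\alpha>0$ with $R(D_n)\geq\alpha$ for every $n$. The goal is to produce, for some large $n$, two distinct points of $D_n$ that lie very close to each other, contradicting discreteness of $D_n$.

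The discreteness is supplied by the control function. Since $D_n$ is controlled by $F$ and $R(D_n)\geq\alpha$, we have $r(D_n)\geq F(\alpha)=:\beta$, so each $D_n$ in the subsequence is $\beta$-discrete and in particular $\beta>0$. So we need to locate two distinct points of $D_n$ at distance strictly less than $\beta$.

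This is where the no-isolated-points hypothesis enters. Fix any $p\in X$; since $p$ is not isolated, I can choose $q\in X$ with $0<d(p,q)<\beta/4$. Now pick $\varepsilon>0$ small enough that (i) $\varepsilon<d(p,q)/3$, (ii) $\varepsilon<\beta/4$, and (iii) $B_{1/\varepsilon}(x_0)$ contains both $p$ and $q$. By the convergence $\rho(D_n,X)\to 0$, for all sufficiently large $n$ we have $X\cap B_{1/\varepsilon}(x_0)\subset D_n^{(+\varepsilon)}$, so I can find $p',q'\in D_n$ with $d(p,p')<\varepsilon$ and $d(q,q')<\varepsilon$. The triangle inequality then gives
$$
d(p',q')\geq d(p,q)-2\varepsilon>d(p,q)/3>0,
$$
so $p'\neq q'$, while on the other hand
$$
d(p',q')\leq d(p,q)+2\varepsilon<\beta/4+\beta/2<\beta,
$$
contradicting $r(D_n)\geq\beta$.

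I do not expect any serious obstacle: the only subtle point is to choose the scale $\varepsilon$ small enough simultaneously to separate $p'$ and $q'$ (so that $\varepsilon<d(p,q)/3$) and to keep them closer than $\beta$ (using $d(p,q)<\beta/2$ and $\varepsilon<\beta/4$), while still being able to invoke Chabauty convergence on a ball containing both $p$ and $q$. Once the choice of $p,q$ from the no-isolated-points hypothesis is combined with the quantitative discreteness $r(D_n)\geq F(\alpha)$ coming from controlledness, the contradiction is immediate.
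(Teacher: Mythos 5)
Your proof is correct, and its core mechanism differs from the paper's in a way worth noting. The paper first proves $\lim_n r(D_n)=0$ by choosing points $u_n\in D_n\cap B_2(x_0)$, extracting (via properness) an accumulation point $x$, and splitting into two cases according to whether the subsequence is eventually constant at $x$ (only that case invokes the no-isolated-points hypothesis, the other case being handled by a separate distance estimate); it then deduces $\lim_n R(D_n)=0$ from controlledness. You run the contrapositive in the opposite order: assume $R(D_n)\geq\alpha$ along a subsequence, use controlledness to get the uniform lower bound $r(D_n)\geq F(\alpha)=\beta$, and then contradict Chabauty convergence directly by approximating a non-isolated point $p$ and a nearby point $q$ by two necessarily distinct points of $D_n$ at distance less than $\beta$. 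This two-point approximation argument subsumes both of the paper's cases in one stroke, avoids the accumulation-point extraction (and hence the explicit use of properness at this step), and is shorter. Your numerical bookkeeping ($\varepsilon<d(p,q)/3$ to separate $p'$ from $q'$, $d(p,q)<\beta/4$ and $\varepsilon<\beta/4$ to keep $d(p',q')\leq 3\beta/4<\beta$) is sound. One shared caveat: both your argument and the paper's tacitly require $F(\alpha)>0$ for $\alpha>0$, which does not follow literally from ``monotone continuous with $F(0)=0$''; since the paper itself relies on this (e.g.\ ``$r(D_n)\geq F(\alpha)>0$'' in the compactness proof), it is not a defect of your proof relative to the paper, but it would be worth flagging.
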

\begin{proof}
If $\lim_{n\to\infty}D_n=X$ then we claim that $\lim_{n\to\infty}r(D_n)=0$. Suppose the contrary. Then there exists $\alpha>0$ such that, passing to a subsequence, we have $r(D_n)\geq\alpha$ for any $n\in\mathbb N$. 
%Suppose that $\rho(X,D_n)<\varepsilon<1$. 
For each $n\in\mathbb N$, take $u_n\in D_n\cap B_2(x_0)$ (they exist as $R(D_n)\leq 1$). As $X$ is proper, there exists an accumulation point $x\in X$ for the sequence $\{u_n\}_{n\in\mathbb N}$, and once again we may pass to a convergent subsequence. Consider the two cases: 1) all but finitely many points in this subsequence equal $x$; 2) there exists a subsequence $\{u_n\}_{n\in\mathbb N}$ such that $\delta_n=d(x,u_n)$ is a decreasing sequence of strictly positive numbers. In the second case, find $m$ such that $\delta_m<\alpha/4$. If $n>m$ then $\delta_m-\delta_n\leq d(u_n,u_m)\leq \delta_m+\delta_n$, hence $d(u_n,u_m)<\alpha/2$. Then $d(u_m,v)\geq\alpha/2$ for any $v\in D_n$, $v\neq u_n$. Therefore 
$$
\rho(X,D_n)\geq d(u_m,D_n)\geq d(u_m,u_n)\geq \delta_m-\delta_n, 
$$
so if $\delta_n<\delta_m/2$ then $\rho(X,D_n)\geq \delta_m/2>0$ for sufficiently great $n$, and we get a contradiction. In the first case we use that $x$ is not isolated, hence there exists a sequence $\{x_m\}_{m\in\mathbb N}$ that is convergent to $x$. Let $m$ satisfy $0<d(x_m,x)<\alpha/2$. Then $\rho(X,D_n)\geq d(x_m,D_n)=d(x_m,x)$ for any $n\in\mathbb N$, which contradicts $\lim_{n\to\infty}D_n=X$.

Since $\lim_{n\to\infty}r(D_n)=0$, as all $D_n$ are controlled, we have also $\lim_{n\to\infty}R(D_n)=0$  
\end{proof}

\section{Preliminaries on  Roe algebras}\label{S2}

Recall the definition of the Roe algebra of a metric space $X$ \cite{Roe}.
Let $H_X$ be a Hilbert space with an action of the algebra $C_0(X)$ of continuous functions on $X$ vanishing at infinity (i.e. a $*$-homomorphism $\pi:C_0(X)\to\mathbb B(H_X)$). For a Hilbert space $H$ we write $\mathbb B(H)$ (resp., $\mathbb K(H)$) for the algebra of all bounded (resp., all compact) operators on $H$. 
We will assume that 
\begin{equation}\label{0}
\{\pi(f)\xi:f\in C_0(X),\xi\in H_X\} \quad\mbox{is\ dense\ in}\quad H_X 
\end{equation}
and that 
\begin{equation}\label{1}
\pi(f)\in\mathbb K(H_X)\quad \mbox{implies\ that}\quad f=0. 
\end{equation}
An operator $T\in\mathbb B(H_X)$ is {\it locally compact} if the operators $T\pi(f)$ and $\pi(f)T$ are compact for any $f\in C_0(X)$. It has {\it finite propagation} if there exists some $R>0$ such that $\pi(f)T\pi(g)=0$ whenever the distance between the supports of $f,g\in C_0(X)$ is greater than $R$. The {\it Roe algebra} $C^*(X,H_X)$ is the norm completion of the $*$-algebra of locally compact, finite propagation operators on $H_X$. As it does not depend on the choice of $H_X$ satisfying (\ref{0}) and (\ref{1}), it is usually denoted by $C^*(X)$. It is convenient to have a Borel measure on $X$. When the measure $\mu$ on $X$ has no atoms, the simplest choice $H_X=L^2(X)$ with the standard action of $C_0(X)$ satisfies the properties (\ref{0}) and (\ref{1}), so it suffices to use $L^2(X)$ to define the Roe algebra. 

When $X$ is discrete, the choice $H_X=l^2(X)$ does not satisfy the condition (\ref{1}). In order to fix this, one may take $H_X=l^2(X)\otimes H$ with an infinitedimensional Hilbert space $H$ to obtain the Roe algebra. But there is also another option: still to use $H_X=l^2(X)$. The resulting $C^*$-algebra is called the {\it uniform Roe algebra} of $X$, and is denoted by $C^*_u(X)$. This $C^*$-algebra differs from the Roe algebra $C^*(X)$. It is more tractable, but has less relations with elliptic theory.

There is one more $C^*$-algebra of this type, suggested by J. \u Spakula \cite{SpakulaJFA}. Recall that a family $\mathcal T$ of operators on $L^2(X)$ is {\it uniformly approximable} if,  for every $\varepsilon > 0$ there is an
$N \in\mathbb N$ such that for every $T\in\mathcal T$ there is an operator $K$ of rank $\leq N$ such that $\|T - K\| < \varepsilon$. $T$ is {\it uniformly locally compact} if, for every $R > 0$ the family 
$$
\{\pi(f)T, T \pi(f) : f \in C_0(X), \diam(\supp f) \leq R \mbox{\ and\ } \|f\|_\infty \leq 1\}
$$ 
is uniformly approximable. The product of two uniformly approximable operators of finite propagation is an operator of the same type, so the subset of such operators is a $*$-subalgebra in $C^*(X)$, and its closure is a $C^*$-algebra denoted in \cite{SpakulaJFA} by $C^*_k(X)$. 

Let $D\subset X$ be a Delone set, and let $\{V_u\}_{u\in D}$ be a partition of $X$, i.e. a family of open subsets of $X$ such that 
\begin{itemize}
\item[(v1)]
there exist $r,R>0$ such that $B_r(u)\subset V_u\subset B_R(u)$ for any $u\in D$;
\item[(v2)]
$X=\cup_{u\in D}\overline{V}_u$;
\item[(v3)]
$\mu(\overline{V}_u\cap \overline{V}_v)=0$ for any $u,v\in D$;
\item[(v4)]
$\dim L^2(V_u)=\infty$ for any $u\in D$.
\end{itemize}
If $X$ is a simplicial complex then one may take Voronoi cells as $\{V_u\}_{u\in D}$.  

It follows from (v2) and (v3) that $L^2(X)=\oplus_{u\in D}L^2(V_u)$, and an operator on $L^2(X)$ can be written as an operator-valued infinite matrix $T=(T_{uv})_{u,v\in D}$, where $T_{uv}$ is an operator from $L^2(V_v)$ to $L^2(V_u)$.

Fix isometric isomorphisms $\zeta_u:l^2(\mathbb N)\to L^2(V_u)$. They induce $*$-isomorphisms $\tilde\zeta_u:\mathbb B(l^2(\mathbb N))\to\mathbb B(L^2(V_u))$. Let $p_n\in\mathbb B(l^2(\mathbb N))$ denote the projection onto the first $n$ coordinates. An operator $T=(T_{uv})_{u,v\in D}\in C^*(X)$ is uniformly locally compact if for any $\varepsilon>0$ there exists $n\in\mathbb N$ such that $\|T_{uv}-\tilde\zeta_u(p_n)T_{uv}\tilde\zeta_v(p_n)\|<\varepsilon$ for any $u,v\in D$.

It was shown in \cite{SpakulaJFA} that $C^*_u(D)\otimes\mathbb K\cong C^*_k(X)$ for any Delone set $D\subset X$.

%To provide (S3), we impose the following requirement on the measure $\mu$ on $X$:

%\begin{itemize}
%\item[($\mu1$)]
%For any $x\in X$, and any $0<r<R$, $\mu(B_r(x))<\mu(B_R(x))$. 
%\end{itemize}

\section{Partitions of unity}\label{S3}

Let $(X,d,\mu)$ be a metric measure space, that is, $X$ is a metric space equipped with a measure $\mu$ defined on
the Borel $\sigma$-algebra defined by the metric topology on $X$. We also assume that $X$ is proper.

Recall that  discrete metric space is of {\it bounded geometry} if for any $R>0$ there exists $N\in\mathbb N$ such that the number of points in each ball $B_R(u)$, $u\in D$, does not exceed $N$.

\begin{defn}
We say that $(X,d,\mu)$ is {\it of bounded geometry} if for every $R>0$ there exist $c,C>0$ such that $c\leq \mu(B_R(x))\leq C$ for any $x\in X$. 

\end{defn}

A similar definition is suggested in \cite{Winkel}.
The following Lemma justifies this term.

\begin{lem}\label{boundedgeometry}
Let $(X,d,\mu)$ be a measure metric space of bounded geometry, and let $D\subset X$ be an $r$-discrete subset. Then $D$ is of bounded geometry.

\end{lem}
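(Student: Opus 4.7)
The plan is to bound the number of points of $D$ in any ball $B_R(u)$ (for $u \in D$) by a volume-comparison argument, exploiting $r$-discreteness of $D$ to get disjoint small balls and bounded geometry of $(X,d,\mu)$ to squeeze their total measure.

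Fix $R > 0$ and $u \in D$. First I would observe that, since $D$ is $r$-discrete, the open balls $\{B_{r/2}(y)\}_{y \in D \cap B_R(u)}$ are pairwise disjoint: for distinct $y_1, y_2 \in D$ we have $d(y_1,y_2) \geq r$, so $B_{r/2}(y_1) \cap B_{r/2}(y_2) = \emptyset$. Next, by the triangle inequality, each such $B_{r/2}(y)$ is contained in $B_{R + r/2}(u)$.

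Applying the bounded geometry hypothesis twice: there exist $c > 0$ with $\mu(B_{r/2}(x)) \geq c$ for every $x \in X$, and $C > 0$ with $\mu(B_{R+r/2}(x)) \leq C$ for every $x \in X$. Then additivity of $\mu$ on the disjoint family yields
\eq{
c \cdot \#(D \cap B_R(u)) \leq \sum_{y \in D \cap B_R(u)} \mu(B_{r/2}(y)) \leq \mu(B_{R+r/2}(u)) \leq C,
}
so $\#(D \cap B_R(u)) \leq C/c =: N$, a bound independent of $u$. Since this $N$ works for all $u \in D$, the discrete metric space $D$ satisfies the bounded geometry condition recalled just above the lemma.

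There is no real obstacle here; the only point deserving care is that the constants $c, C$ furnished by the hypothesis apply uniformly in the center of the ball, which is exactly what lets the final $N$ be chosen independently of $u$.
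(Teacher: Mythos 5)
Your proof is correct and uses exactly the argument of the paper: pack the pairwise disjoint balls $B_{r/2}(y)$, $y\in D\cap B_R(u)$, into $B_{R+r/2}(u)$ and compare measures using the two-sided bounds from bounded geometry. The only cosmetic difference is that you state the bound $\#(D\cap B_R(u))\leq C/c$ directly, whereas the paper phrases the same estimate as a proof by contradiction.
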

\begin{proof}
Suppose the contrary: for some $R>0$ and for every $N\in\mathbb N$ there exists $u(N)\in D$ such that the number of points in $B_R(u(N))$ is greater than $N$. Let $u_1,\ldots,u_N$ be such points. Then $B_{r/2}(u)\cap B_{r/2}(v)=\emptyset$ for any $u,v\in D$, $v\neq u$, and $\cup_{i=1}^N B_{r/2}(u_i)\subset B_{R+r/2}(u(N))$, hence $\sum_{i=1}^N\mu(B_{r/2}(u_i))\leq \mu(B_{R+r/2}(u(N)))$. Let $c_{r/2}$, $C_{r/2}$ and $C_{R+r/2}$, $C_{R+r/2}$ be the constants from the definition of bounded geometry for radii $r/2$ and $R+r/2$. Then we have 
$$
Nc_{r/2}\leq \sum_{i=1}^N\mu(B_{r/2}(u_i))\leq \mu(B_{R+r/2}(u(N)))\leq C_{R+r/2}
$$  
for any $N\in\mathbb N$ --- a contradiction.
\end{proof}

\begin{lem}\label{L-pu}
Let $D$ be a $(r,R)$-Delone set in a measure metric space $X$ of bounded geometry. Then there exists a set $\phi^D=\{\phi^D_u\}_{u\in D}$ of functions in $L^2(X)$ with the following properties:
\begin{itemize}
\item[(p1)]
$\supp\phi^D_u\subset B_{2R}(u)$ for any $u\in D$;
\item[(p2)]
$\phi^D_u(x)=1$ for any $x\in B_{r/6}(u)$;
\item[(p3)]
$\sum_{u\in D}\phi^D_u(x)=1$ for any $x\in X$.
\item[(p4)]
there exists $L>0$ such that $\phi^D_u$ is $L$-Lipschitz for any $u\in D$.

\end{itemize}

\end{lem}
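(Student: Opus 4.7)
The plan is to construct $\phi^D_u$ as a sum of two ingredients: a ``hard core'' bump $\alpha_u$ with pairwise disjoint supports that delivers property (p2) by itself, and a ``soft filler'' built from normalized $1$-Lipschitz cones on balls of radius roughly $2R$, which adjusts the total so that the sum is identically $1$.

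Concretely, I would take $\alpha_u(x) = \min\bigl(1, \max\bigl(0, 1 - (6/r)(d(x,u) - r/6)\bigr)\bigr)$, which is $(6/r)$-Lipschitz, equals $1$ on $B_{r/6}(u)$, and vanishes outside $B_{r/3}(u)$; and $\beta_u(x) = \max(0, 3R/2 - d(x,u))$, which is $1$-Lipschitz and supported in $\overline{B}_{3R/2}(u) \subset B_{2R}(u)$. Since $D$ is $r$-discrete, the supports of the $\alpha_u$ are pairwise disjoint, so $\gamma := 1 - \sum_u \alpha_u$ takes values in $[0,1]$; since $D$ is $R$-dense, the function $\beta := \sum_u \beta_u$ satisfies $\beta \geq R/2$ everywhere. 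Then I set
\begin{equation*}
\phi^D_u(x) = \alpha_u(x) + \gamma(x)\,\frac{\beta_u(x)}{\beta(x)}.
\end{equation*}

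Properties (p1)--(p3) follow directly: $\supp \phi^D_u \subset B_{2R}(u)$ is immediate from the supports of $\alpha_u$ and $\beta_u$; on $B_{r/6}(u)$ every $\alpha_v$ with $v \neq u$ vanishes by $r$-discreteness, so $\alpha_u = 1$ and $\gamma = 0$, giving $\phi^D_u = 1$ and $\phi^D_v = 0$ for $v \neq u$; and
\begin{equation*}
\sum_u \phi^D_u = \sum_u \alpha_u + \gamma \cdot \frac{\sum_u \beta_u}{\beta} = (1 - \gamma) + \gamma = 1.
\end{equation*}

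The main obstacle is (p4), the uniform Lipschitz bound, and this is where the bounded-geometry hypothesis plays its role. Applying Lemma \ref{boundedgeometry} gives a uniform constant $N$ bounding the number of points of $D$ in any ball of radius $6R$. A direct case analysis then shows that $\beta$ is $O(N)$-Lipschitz; combined with $\beta \geq R/2$ and $\beta_u \leq 3R/2$, the ratio $\beta_u/\beta$ is Lipschitz with constant $O(N/R)$. Multiplying by $\gamma$ (which is bounded by $1$ and itself $(6/r)$-Lipschitz) preserves this order of magnitude, and adding the $(6/r)$-Lipschitz term $\alpha_u$ yields a single Lipschitz constant $L$, independent of $u \in D$, as required.
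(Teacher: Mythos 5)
Your construction is correct, and it takes a genuinely different route from the paper's. The paper works with the single open cover $W_u=B_{2R}(u)\setminus\bigcup_{v\neq u}\overline{B_{r/6}(v)}$, sets $h^D_u(x)=d(x,X\setminus W_u)$ and $\phi^D_u=h^D_u/\sum_v h^D_v$, and obtains (p4) by citing the Bell--Dranishnikov bound on the Lipschitz constant of such a partition in terms of the multiplicity and the Lebesgue number of the cover; the only substantive work there is a case analysis on $d(x,D)$ showing the Lebesgue number is positive. Your two-layer construction (disjoint hard bumps $\alpha_u$ plus the normalized cone filler $\gamma\,\beta_u/\beta$) replaces that citation with elementary explicit estimates: the lower bound $\beta\geq R/2$ coming from $R$-density plays the role of the Lebesgue number, the bound $N$ from Lemma \ref{boundedgeometry} on the number of cones meeting a ball plays the role of the multiplicity, and you get an explicit constant $L=O(1/r+N/R)$, which is more self-contained and more quantitative. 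Two small points deserve to be written out rather than asserted. First, $\gamma$ is $(6/r)$-Lipschitz not merely because each $\alpha_u$ is, but because at most one $\alpha_u$ is nonzero at any point, so for $x\in\supp\alpha_u$, $y\in\supp\alpha_v$ with $u\neq v$ one has $|\sum_w\alpha_w(x)-\sum_w\alpha_w(y)|\leq\max\bigl(|\alpha_u(x)-\alpha_u(y)|,|\alpha_v(x)-\alpha_v(y)|\bigr)$. Second, the inclusion $\supp\alpha_u\subset B_{r/3}(u)\subset B_{2R}(u)$ is not literally immediate when $r>6R$; it does hold, because a point $x$ with $2R\leq d(x,u)<r/3$ would, by $R$-density, produce $v\in D$, $v\neq u$, with $d(u,v)<r/3+R<r/2$, contradicting $r$-discreteness. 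With these two lines added, your proof is complete and valid.
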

\begin{proof}
For $u\in D$, set $W_u=B_{2R}(u)\setminus(\cup_{v\in D,v\neq u} \overline{B_{r/6}(v)})$. This gives an open cover for $X$. Indeed, if $d(x,D)\geq r/6$ and $d(x,u)\leq R$ then $x\notin B_{r/6}(v)$ for any $v\in D$, hence $x\in W_u$. If $d(x,D)<r/6$ then $d(x,u)<r/6$ for some $u\in D$, hence $d(x,v)\geq r/3$ for any $v\in D$, $v\neq u$, hence $x\in W_u$. As $D$ is of bounded geometry, the cover by balls $R_{2R}(u)$, $u\in D$, has finite multiplicity, therefore the cover $\{W_u\}_{u\in D}$ has finite multiplicity as well. Set $h^D_u(x)=d(x,X\setminus W_u)$, 
\begin{equation}\label{p_u}
\phi^D_u(x)=\frac{h^D_u(x)}{\sum_{v\in D}h^D_v(x)}. 
\end{equation}
Then the family $\{\phi^D_u\}_{u\in D}$ is a partition of unity, i.e. (p3) holds. As $W_u\subset B_{2R}(u)$, (p1) holds. If $x\in B_{r/6}(u)$ then $x\notin W_v$ for any $v\in D$, $v\neq u$, hence $\phi^D_v(x)=0$, and $(p2)$ holds.  

It remains to show (p4). We use the following result from \cite{Bell-Dran}: the Lipschitz constant for the functions (\ref{p_u}) is bounded from above by a constant depending on the multiplicity of the cover $\{W_u\}_{u\in D}$ (which we already know to be finite) and by the Lebesgue number of this cover, so we need to show only that the Lebesgue number for this cover is strictly positive, i.e. that there exists $a>0$ such that any ball of radius $a$ lies in one of the sets of this cover.  
Take a point $x\in X$. Let $u\in D$ be the nearest point to $x$, and let $d(x,u)=b$. Consider two cases. First, suppose that $b<\frac{5r}{12}$. Then $d(x,v)\geq \frac{7r}{12}$ for any $v\in D$ different from $u$, hence $B_{5r/12}\subset W_u$. Second, suppose that $b\geq\frac{5r}{12}$. As this is the distance from $x$ to the nearest point of $D$, $d(x,v)\geq b\geq \frac{5r}{12}$ for any $v\in D$ different from $u$. As $D$ is $R$-dense, $d(x,u)<R$. Set $c=\min(\frac{r}{4},R)$. Then $B_c(x)\subset B_{2R}(u)$, and as $\frac{5r}{12}=\frac{r}{4}+\frac{r}{6}$, $B_c(x)\cap B_{r/6}=\emptyset$, hence $B_c(x)\subset W_u$. Thus, the Lebesgue number is at least $\min(\frac{5r}{12},c)>0$.  
\end{proof}

Let $L^2(X)=L^2(X,\mu)$ be the Hilbert space of square-integrable functions with the inner product $\langle \phi,\psi\rangle=\int_X\overline{\phi}\psi\,d\mu$.

Set $g^D_{uv}=\langle\phi^D_u,\phi^D_v\rangle$. The matrix $G^D=(g^D_{uv})_{u,v\in D}$ can be considered as a matrix of an operator on $l^2(D)$ with respect to the standard basis $\{\delta_u\}_{u\in D}$. 

\begin{lem}\label{L-bi}
The matrix $G^D$ is the matrix of a bounded positive invertible operator on $l^2(D)$. 

\end{lem}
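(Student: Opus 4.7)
The strategy is to recognize $G^D$ as the Gram matrix of the family $\{\phi^D_u\}_{u\in D}\subset L^2(X)$, so that for any finitely supported $\xi=(\xi_u)\in l^2(D)$ one has
\eq{\label{gram}
\langle G^D\xi,\xi\rangle=\Bigl\|\sum_{u\in D}\xi_u\phi^D_u\Bigr\|_{L^2(X)}^2.
}
Positivity of $G^D$ is then automatic on the dense subspace of finitely supported vectors, and passes to $l^2(D)$ once boundedness of $G^D$ is established.

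For boundedness I plan to use the Schur test. Property (p1) shows that $g^D_{uv}=0$ whenever $d(u,v)>4R$, and by Lemma \ref{boundedgeometry} the set $D$ is of bounded geometry, so the number of $v\in D$ at distance $\leq 4R$ from any given $u$ is bounded by some $N$. Cauchy--Schwarz gives $|g^D_{uv}|\leq\|\phi^D_u\|_2\|\phi^D_v\|_2\leq\mu(B_{2R}(u))^{1/2}\mu(B_{2R}(v))^{1/2}\leq C_{2R}$ by bounded geometry of the measure. Hence $\sum_v|g^D_{uv}|\leq NC_{2R}$ uniformly in $u$, and symmetrically in $v$, so $G^D$ extends to a bounded self-adjoint operator on $l^2(D)$.

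The core step is the lower bound $G^D\geq c\cdot I$ for some $c>0$, which gives invertibility. Here I would exploit two consequences of the construction in Lemma \ref{L-pu}. First, the formula (\ref{p_u}) shows $\phi^D_u\geq 0$; combined with (p2) and (p3) this forces $\phi^D_v(x)=0$ for every $v\neq u$ whenever $x\in B_{r/6}(u)$. Second, since $D$ is $r$-discrete, the balls $\{B_{r/6}(u)\}_{u\in D}$ are pairwise disjoint. For finitely supported $\xi$ these two facts together yield
\eq{\label{lower}
\Bigl\|\sum_{u\in D}\xi_u\phi^D_u\Bigr\|^2\geq\sum_{u\in D}\int_{B_{r/6}(u)}|\xi_u|^2\,d\mu=\sum_{u\in D}|\xi_u|^2\mu(B_{r/6}(u))\geq c_{r/6}\|\xi\|_{l^2(D)}^2,
}
where $c_{r/6}>0$ comes from the bounded geometry assumption on $(X,d,\mu)$.

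Combining (\ref{gram}) and (\ref{lower}) gives $\langle G^D\xi,\xi\rangle\geq c_{r/6}\|\xi\|^2$ on the dense subspace of finitely supported vectors, hence, by continuity, on all of $l^2(D)$. A bounded positive self-adjoint operator bounded below by a positive constant has spectrum contained in $[c_{r/6},\|G^D\|]$ and is therefore invertible. There is no real obstacle here once the Gram-matrix viewpoint and the disjointness of the balls $B_{r/6}(u)$ are in place; the only subtlety is verifying that the partition-of-unity condition, together with non-negativity, actually forces $\phi^D_v$ to vanish on $B_{r/6}(u)$ for $v\neq u$, which however is immediate from (p2), (p3) and $\phi^D_v\geq 0$.
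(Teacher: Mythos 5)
Your proposal is correct and follows essentially the same route as the paper: positivity comes from the Gram-matrix identity, boundedness from finite propagation of $G^D$ plus bounded geometry (your Schur test is a cosmetic variant of the paper's direct estimate of $\|G^D\xi\|^2$), and the lower bound $G^D\geq c_{r/6}I$ is obtained exactly as in the paper by splitting the integral over the disjoint balls $B_{r/6}(u)$, where the form is diagonal with entries $\mu(B_{r/6}(u))$ uniformly bounded below, and observing that the remaining part is a positive Gram form.
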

\begin{proof}
Write $v\sim u$ if $g_{uv}\neq 0$. Let $N=\max_{u\in D}\{\#\{v\in D:v\sim u\}\}$, and let $g=\sup_{u,v\in D}|g_{uv}|$. 
Let $\xi=\sum_{u\in D}\xi_u\delta_u$. Then $G^D(\xi)=\eta_u\delta_u$, where $\eta_u=\sum_{v\in D}g^D_{uv}\xi_v$, and
\begin{eqnarray*}
\|G^D(\xi)\|^2&=&\sum_{u\in D}\Bigl|\sum_{v\in D}g^D_{uv}\xi_v\Bigr|^2
\leq\sum_{u\in D}Ng^2\sum_{v\sim u}|\xi_v|^2\\
&\leq&\sum_{u\in D}N^2g^2|\xi_u|^2=N^2g\|\xi\|^2,
\end{eqnarray*}
thus $G^D$ is bounded. Clearly, $G^D$ is selfadjoint, and even positive.

Let $Y=\cup_{u\in D}B_{r/6}(u)\subset X$.
Write $\langle \phi,\psi\rangle=\langle \phi,\psi\rangle_1+\langle \phi,\psi\rangle_2$, where 
$$
\langle \phi,\psi\rangle_1=\int_{Y}\overline{\phi}\psi\,d\mu, 
\quad
\langle \phi,\psi\rangle_2=\int_{X\setminus Y}\overline{\phi}\psi\,d\mu,
$$
and let $g^i_{uv}=\langle\phi^D_u,\phi^D_v\rangle_i$, $i=1,2$, $G^i=(g^i_{uv})_{u,v\in D}$, $G^D=G^1+G^2$.

Clearly, $G^2$ is positive, and $G^1$ is diagonal with diagonal entries $g^1_{uu}=\mu(B_{r/3}(u))$, hence uniformly separated from 0, therefore, $G^D\geq G^1$ is invertible.
\end{proof}

\section{Isometry from $l^2(D)$ to $L^2(X)$}\label{S4}

Given a partition of unity $\phi^D$ satisfying the conditions of Lemma \ref{L-pu}, set $\xi_u=(G^D)^{1/2}(\delta_u)$. By Lemma \ref{L-bi}, $\{\xi_u\}_{u\in D}$ is a basis for $l^2(D)$. Define a linear map $U_D:l^2(D)\to L^2(X)$ by setting $U_D(\xi_u)=\phi^D_u$. 

As 
\begin{eqnarray*}
\langle \xi_u,\xi_v\rangle&=&\langle(G^D)^{1/2}\delta_u,(G^D)^{1/2}\delta_v\rangle=\langle \delta_u,G^D\delta_v\rangle\\
&=&\langle \delta_u,\sum_{w\in D}g^D_{wv}\delta_w\rangle=g^D_{uv}=\langle\phi^D_u,\phi^D_v\rangle,
\end{eqnarray*}
$U_D$ is an isometry. Set $\psi^D_u=U^D(\delta_u)$. Then $\{\psi^D_u\}_{u\in D}$ is an orthonormal set in $L^2(X)$.
Denote by $H^D$ the linear span of the functions $\{\psi^D_u\}_{u\in D}$ (or, equivalently, of the functions $\{\phi^D_u\}_{u\in D}$). Then $P_D=U_DU^*_D$ is the projection onto $H^D$ in $L^2(X)$.

\begin{lem}\label{stronglimit}
Let $D_n\in\mathcal D_F$, $\lim_{n\to\infty}D_n=X$.
Then the sequence of projections $P_{D_n}$ is strongly convergent to the identity operator on $L^2(X)$. 

\end{lem}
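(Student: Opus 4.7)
The plan is to verify strong convergence $P_{D_n}\to I$ on a dense subspace of $L^2(X)$ and then extend using the uniform bound $\|P_{D_n}\|\le 1$. A convenient dense subspace is $C_c(X)$, the continuous functions of compact support.

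For $f\in C_c(X)$ with support $K$, I would introduce the explicit approximant
\[
g_n = \sum_{u\in D_n} f(u)\,\phi^{D_n}_u.
\]
Since $D_n$ is discrete and $K$ is compact, only finitely many terms are nonzero, so $g_n$ is a finite linear combination of the $\phi^{D_n}_u$, hence $g_n\in H^{D_n}$. Using the partition-of-unity identity (p3) and the support bound (p1), one obtains the pointwise identity
\[
g_n(x)-f(x)=\sum_{u\in D_n,\ d(x,u)\le 2R(D_n)} (f(u)-f(x))\,\phi^{D_n}_u(x).
\]

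By Lemma \ref{r-R}, $R(D_n)\to 0$; combined with uniform continuity of $f$ on $X$ (extension by zero from a compact set is uniformly continuous), the right-hand side can be made uniformly small in $x$ once $n$ is large, say $\sup_x|g_n(x)-f(x)|<\varepsilon'$. Moreover, $\supp g_n\subset K^{(+2R(D_n))}\subset K^{(+2)}$, a set of finite (and uniformly bounded in $n$) measure by bounded geometry together with compactness of $K$. Hence $\|g_n-f\|_{L^2}\to 0$, and since $P_{D_n}$ is the orthogonal projection onto $H^{D_n}$ and $g_n\in H^{D_n}$,
\[
\|P_{D_n}f-f\|\le \|P_{D_n}f-P_{D_n}g_n\|+\|g_n-f\|\le 2\|g_n-f\|\longrightarrow 0.
\]

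A standard $\varepsilon/3$ density argument using $\|P_{D_n}\|\le 1$ then promotes strong convergence on $C_c(X)$ to strong convergence on all of $L^2(X)$. No serious obstacle is expected: the ingredients are Lemma \ref{r-R} (which requires no isolated points), the uniform-continuity/compact-support structure of $C_c(X)$, and bounded geometry (to control the measure of the slightly enlarged support $K^{(+2R(D_n))}$). The mild point to keep in mind is that the sum defining $g_n$ is a finite sum only because $f$ has compact support — otherwise discreteness of $D_n$ together with $r(D_n)\to 0$ would not by itself yield finiteness.
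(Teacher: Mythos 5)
Your proposal is correct and follows essentially the same route as the paper: approximate $f\in C_c(X)$ uniformly by $g_n=\sum_{u\in D_n}f(u)\phi^{D_n}_u\in H^{D_n}$ using (p1), (p3), Lemma \ref{r-R} and uniform continuity, pass to the $L^2$-norm via the finite measure of the (slightly enlarged) support, and extend by density with $\|P_{D_n}\|\le 1$. Your handling of the support $K^{(+2R(D_n))}$ is in fact a touch more careful than the paper's, which simply writes $\mu(K)\varepsilon$; the paper also shortens your final triangle inequality by invoking the best-approximation property $\|f-P_{D_n}f\|\le\|f-g_n\|$, but these differences are cosmetic.
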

\begin{proof}
Let $f\in C_0(X)$ be a continuous function with compact support $K\subset X$. For any $\varepsilon>0$ there exists $\delta>0$ such that $d(x,y)<\delta$ implies $|f(x)-f(y)|<\varepsilon$. Set $g_n=\sum_{u\in D_n}f(u)\phi_u^{D_n}$. Note that $g_n\in H^{D_n}$. Let $x\in X$. If $n$ is great enough then $2R(D_n)<\delta$ by Lemma \ref{r-R}, and one has 
\begin{eqnarray*}
|f(x)-g_n(x)|&=&|f(x)-\sum_{u\in D_n}f(u)\phi_u^{D_n}(x)|\\
&=&|f(x)-\sum_{u\in D_n, d(u,x)<2R(D_n)}f(u)\phi_u^{D_n}(x)|\\
&=&|\sum_{u\in D_n, d(u,x)<2R(D_n)}(f(x)-f(u))\phi_u^{D_n}(x)|\\
&\leq&\sum_{u\in D_n, d(u,x)<2R(D_n)}|f(x)-f(u)|\phi_u^{D_n}(x)\\
&\leq&\varepsilon\sum_{u\in D_n, d(u,x)<2R(D_n)}\phi_u^{D_n}(x)\\
&\leq&\varepsilon\sum_{u\in D_n}\phi_u^{D_n}(x) \ \ \  = \ \ \varepsilon. 
\end{eqnarray*}
Therefore, $\lim_{n\to\infty}\sup_{x\in X}|f(x)-g_n(x)|=0$. Then $\|f-g_n\|_{L^2(X)}\leq \mu(K)\varepsilon$, hence $\lim_{n\to\infty}\|f(x)-g_n(x)\|_{L^2(X)}=0$. Therefore, $\|f-P_{D_n}f\|_{L^2(X)}\leq\|f-g_n\|_{L^2(X)}$ also vanishes as $n\to\infty$. As continuous functions with compact support are dense in $L^2(X)$, we conclude that the $*$-strong limit of $P_{D_n}$ is the identity operator. 
\end{proof}

Define $\alpha^{D,\phi^D}:C_u^*(D)\to\mathbb B(L^2(X))$ by $\alpha^{D,\phi^D}(T)=U_D T U_D^*$, and $\beta^{D,\phi^D}: C^*(X)\to\mathbb B(l^2(D))$ by $\beta^{D,\phi^D}(S)=U_D^* S U_D$. 

\begin{lem}
The range of $\alpha^{D,\phi^D}$ lies in $C^*(X)$; the range of $\beta^{D,\phi^D}$ lies in $C^*_u(D)$.

\end{lem}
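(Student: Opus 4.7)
The plan is to factor $U_D=\pi^D(G^D)^{-1/2}$, where $\pi^D:l^2(D)\to L^2(X)$ is the bounded operator defined by $\pi^D(\delta_u)=\phi^D_u$; one checks that $(\pi^D)^*\pi^D=G^D$, so $\pi^D$ is bounded by Lemma \ref{L-bi}, and $\psi^D_u=\pi^D(G^D)^{-1/2}\delta_u$. Under this factorization,
$$
\alpha^{D,\phi^D}(T)=\pi^D\bigl((G^D)^{-1/2}T(G^D)^{-1/2}\bigr)(\pi^D)^*,\qquad \beta^{D,\phi^D}(S)=(G^D)^{-1/2}(\pi^D)^*S\pi^D(G^D)^{-1/2}.
$$
The entries $g^D_{uv}=\langle\phi^D_u,\phi^D_v\rangle$ vanish once $d(u,v)>4R$ by the support condition on the $\phi^D_u$, so $G^D$ has finite propagation on $l^2(D)$ and lies in $C^*_u(D)$; by continuous functional calculus so does $(G^D)^{-1/2}$. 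It is therefore enough to prove the two claims (A) $\pi^D S(\pi^D)^*\in C^*(X)$ for every $S\in C^*_u(D)$, and (B) $(\pi^D)^* S\pi^D\in C^*_u(D)$ for every $S\in C^*(X)$, after which the lemma follows by absorbing $(G^D)^{-1/2}\in C^*_u(D)$ on either side.

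Both claims I would verify first on finite-propagation operators and then extend by continuity, using that the maps $S\mapsto\pi^D S(\pi^D)^*$ and $S\mapsto(\pi^D)^* S\pi^D$ are bounded of norm at most $\|G^D\|$. For (A), suppose $S_{uv}=0$ whenever $d(u,v)>R_0$. The support estimate $\supp\phi^D_u\subset B_{2R}(u)$ from Lemma \ref{L-pu} makes $\pi^D S(\pi^D)^*$ have propagation at most $R_0+4R$: if $d(\supp f,\supp g)>R_0+4R$, then no pair $u,v\in D$ simultaneously satisfies $d(u,\supp f)\leq 2R$, $d(v,\supp g)\leq 2R$ and $d(u,v)\leq R_0$, so $\pi(f)\pi^D S(\pi^D)^*\pi(g)=0$. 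For local compactness, when $f\in C_0(X)$ has compact support, Lemma \ref{boundedgeometry} forces only finitely many $u\in D$ to satisfy $B_{2R}(u)\cap\supp f\neq\emptyset$, so $(\pi^D)^*\pi(f)$ has finite rank; approximating general $f\in C_0(X)$ in the sup-norm by compactly supported functions gives compactness of $(\pi^D)^*\pi(f)$, and then $\pi^D S(\pi^D)^*\pi(f)$ and (adjointly) $\pi(f)\pi^D S(\pi^D)^*$ are compact.

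Claim (B) is the easier half, since $C^*_u(D)$ carries no local-compactness condition: the scalar matrix entries $((\pi^D)^* S\pi^D)_{uv}=\langle S\phi^D_v,\phi^D_u\rangle$ vanish for $d(u,v)>R_0+4R$ by the same support/propagation estimate applied to $\pi(\phi^D_u)S\pi(\phi^D_v)$, so $(\pi^D)^* S\pi^D$ is a bounded operator of finite propagation on $l^2(D)$ and hence belongs to $C^*_u(D)$. The main conceptual obstacle in the argument is that the orthonormalized functions $\psi^D_u$ need not have compact support, so $\alpha^{D,\phi^D}(T)$ is not manifestly of finite propagation even when $T$ is; the factorization through $\pi^D$ combined with the observation that $(G^D)^{-1/2}\in C^*_u(D)$ is precisely the device that absorbs this spreading into the $C^*$-algebra on the discrete side.
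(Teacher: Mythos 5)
Your proposal is correct and follows essentially the same route as the paper, which defers the details to \cite{Lobachevskii} but identifies the same key mechanism: $G^D$ has finite propagation, so $(G^D)^{\pm 1/2}$ is a norm limit of finite-propagation operators and hence lies in $C^*_u(D)$, after which the support bound $\supp\phi^D_u\subset B_{2R}(u)$ yields the propagation and local-compactness estimates. Your write-up simply makes explicit the factorization $U_D=\pi^D(G^D)^{-1/2}$ and the resulting reduction, which is exactly the argument the paper is invoking.
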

This Lemma was proved in \cite{Lobachevskii} for special Delone sets obtained from triangulations, but the proof works for general ones as well. It is based on the fact that $G^D$ has finite propagation, hence $(G^D)^{\pm 1/2}$ is a limit of finite propagation operators. 

Thus, we have maps
$$
\alpha^{D,\phi^D}:C^*_u(D)\to C^*(X);\quad \beta^{D,\phi^D}:C^*(X)\to C^*_u(D).
$$
The first one is a $*$-homomorphism, while the second one is only a completely positive map.

Note that if $\phi^D$ and $(\phi')^D$ are two partitions of unity satisfying the conditions of Lemma \ref{L-pu} then $\alpha^{D,\phi^D}$ and $\alpha^{D,(\phi')^D}$ (resp., $\beta^{D,\phi^D}$ and $\beta^{D,(\phi')^D}$) are unitarily equivalent, and also homotopic (as the linear path of partitions of unity is a partition of unity). We shall skip the partition of unity from the notation and write these maps as $\alpha^D$ and $\beta^D$ when the particular partition of unity does not matter.

\section{Partition of $X$}\label{S5}

Here we construct a special partition of $X$. For $A\subset X$ and for $\delta>0$ we write $A^{(-\delta)}$ for the set $A^{(-\delta)}=\{x\in X:d(x,X\setminus A)>\delta\}$.

\begin{lem}
Let $X$ be a proper metric measure space of bounded geometry with no isolated points, and let $D\subset X$ be a Delone set. Then there exists a family $\{V_u\}_{u\in D}$ of open subsets such that 
\begin{itemize}
\item[(V1)]
$B_{r(D)/2}(u)\subset V_u\subset B_{2R(D)}(u)$ for any $u\in D$;
\item[(V2)]
$V_u\cap V_v=\emptyset$ for $u\neq v\in D$, and $X=\cup_{u\in D}\overline{V}_u$;
\item[(V3)]
for any $u\in D$ and any $\varepsilon>0$ there exists $\delta>0$ such that $\mu(V_u\setminus V_u^{(-\delta)})<\varepsilon$;
\item[(V4)]
$\dim L^2(V_u)=\infty$.

\end{itemize}

\end{lem}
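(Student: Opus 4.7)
The plan is to construct the $V_u$ as topological interiors of generalized Voronoi cells with tiebreaking by a fixed enumeration of $D$. Enumerate $D=\{u_k\}_{k\in\mathbb N}$ and, for each $x\in X$, let $N(x)=\{v\in D:d(x,v)=d(x,D)\}$ denote the set of $D$-points realizing the distance from $x$ to $D$ (this is a finite set because $D$ is locally finite by Lemma \ref{boundedgeometry} and $X$ is proper). Let $k(x)=\min\{k:u_k\in N(x)\}$, and set
$$
A_{u_k}=\{x\in X:k(x)=k\},\qquad V_{u_k}=\operatorname{int}_X(A_{u_k}).
$$
The $\{A_u\}$ form a partition of $X$, so the $V_u$'s are pairwise disjoint open sets.

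For (V1), if $x\in B_{r(D)/2}(u)$ then $d(x,u)<r(D)/2$ while, for any $v\in D\setminus\{u\}$, $d(x,v)\geq d(u,v)-d(x,u)>r(D)/2$; so $u$ is the unique nearest $D$-point to $x$, whence $B_{r(D)/2}(u)\subset A_u$, and openness of the ball gives $B_{r(D)/2}(u)\subset V_u$. Conversely, every $x\in A_u$ satisfies $d(x,u)=d(x,D)\leq R(D)$ by $R(D)$-density, so $V_u\subset A_u\subset B_{2R(D)}(u)$.

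For (V2), only the identity $X=\bigcup_u\overline{V_u}$ remains. Given $x\in X$ with $x\in A_u$, I would exhibit $x$ as a limit of points of $\operatorname{int}(A_u)$. The open Voronoi cell $\tilde V_u=\{y:d(y,u)<d(y,v)\ \forall v\neq u\}$ is clearly contained in $\operatorname{int}(A_u)$; moreover, any point $y$ at which the tie-set $N(y)$ is locally constant and has smallest index $k$ with $u_k=u$ also lies in $\operatorname{int}(A_u)$. Using local finiteness of $D$ together with the absence of isolated points in $X$, I would produce, in every neighborhood of $x$, either a point in $\tilde V_u$ (by perturbing in a direction that makes $u$ strictly closer than the other tied $D$-points) or a point whose tie structure is locally constant and smallest-indexed at $u$; in either case such a point lies in $\operatorname{int}(A_u)$, showing $x\in\overline{V_u}$.

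Properties (V3) and (V4) are then comparatively routine. For (V3): $V_u\subset B_{2R(D)}(u)$ has finite $\mu$-measure by bounded geometry, and since $V_u$ is open we have $V_u=\bigcup_{\delta>0}V_u^{(-\delta)}$, so continuity of measure from below yields $\mu(V_u^{(-\delta)})\nearrow\mu(V_u)$ and hence $\mu(V_u\setminus V_u^{(-\delta)})\to 0$ as $\delta\to 0^+$. For (V4): since $V_u\supset B_{r(D)/2}(u)$ is a nonempty open set in a space with no isolated points, I would iteratively extract a countable family of pairwise disjoint open balls inside $V_u$, each of strictly positive $\mu$-measure by bounded geometry; their indicator functions are linearly independent in $L^2(V_u)$, giving $\dim L^2(V_u)=\infty$. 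The main obstacle throughout is the closure-cover assertion in (V2), where one must argue — in an arbitrary proper metric space without any geodesic or path-connectedness hypothesis — that every boundary point of a tiebroken Voronoi region can still be approximated from its interior; the combination of bounded geometry (giving local finiteness of $D$) and no isolated points (providing sufficient nearby perturbations) is what makes this work.
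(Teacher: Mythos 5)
Your overall strategy (tiebroken Voronoi cells $A_{u_k}=\{x\in X: k(x)=k\}$ with $V_u=\operatorname{int}(A_u)$) is genuinely different from the paper's, which builds the family by Zorn's lemma on collections of disjoint open sets satisfying (V1) and (V3) and extracts the covering half of (V2) from maximality: if some $x$ avoided all the closures, one could enlarge a nearby $V_u$ by a small ball, contradicting maximality. Your treatment of (V1), (V3) and (V4) is correct, and your observation that (V3) is automatic for any open set of finite measure, via $V_u^{(-\delta)}\uparrow V_u$ as $\delta\downarrow 0$ and continuity of the measure from below, is cleaner than what the paper does.

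The gap is exactly where you flagged it: the assertion $X=\bigcup_u\overline{V_u}$. The mechanism you sketch --- ``perturbing in a direction that makes $u$ strictly closer'' --- has no meaning in a bare proper metric space: there are no directions, and nothing guarantees that a neighborhood of a tied point contains either a point of the strict cell $\tilde V_u$ or a point with locally constant tie structure. You are also trying to prove the stronger claim that $x\in\overline{V_u}$ for the \emph{particular} $u$ with $x\in A_u$, which is more than the lemma needs and is not clearly true. What does rescue your construction is a Baire category argument: each $A_{u_k}$ is locally closed, being the intersection of the closed set $\{x:d(x,u_k)\le d(x,v)\ \text{for all}\ v\in D\}$ with the open sets $\{x:d(x,D)<d(x,u_j)\}$, $j<k$; a locally closed set with empty interior is nowhere dense; so if some $x$ had a neighborhood $U$ disjoint from every $V_w$, then $U$ would be a finite union (by local finiteness of $D$) of nowhere dense sets, contradicting the Baire property of the locally compact space $X$. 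Hence some $V_w$ meets every neighborhood of $x$, which is all that is needed. You should replace the perturbation sketch by this argument (or by the paper's maximality argument); as written, the central step of your proof is unproved.
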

\begin{proof}
Let $\mathcal A$ be the set of collections $V^\alpha=\{V^\alpha_u\}_{u\in D}$, $\alpha\in\mathcal A$, such that
\begin{itemize}
\item[(0)]
each $V_u^\alpha$ is an open subset of $X$;
\item[(1)]
$B_{r(D)/2}(u)\subset V_u\subset B_{2R(D)}(u)$ for any $u\in D$
\item[(2)]
$V_u\cap V_v=\emptyset$ for $u\neq v\in D$;
\item[(3)]
for any $u\in D$ and any $\varepsilon>0$ there exists $\delta>0$ such that $\mu(V_u\setminus V_u^{(-\delta)})<\varepsilon$.
\end{itemize} 

This is a partially ordered set: $V^\alpha\leq V^\beta$ if $V^\alpha_u\subset V^\beta_u$ for any $u\in D$.

Consider a chain $(V^\beta)_{\beta\in\mathcal B}$, $\mathcal B\subset\mathcal A$. We claim that $V$ given by $V_u=\cup_{\beta\in\mathcal B}V_u^\beta$ is the maximal element for this chain. Clearly, $V$ satisfies (0)--(2), so it remains to check (3). 

Fix $u\in D$, and note that, for any $\delta>0$ there exists $\beta\in\mathcal B$ such that $V_u^{(-\delta)}\subset V_u^\beta\subset V_u$. Indeed, suppose the contrary. Then for any $\beta\in\mathcal B$ there exists $x_\beta\in X$ such that $x_\beta\in V^{(-\delta)}_u$ but $x_\beta\notin V_u^\beta$. As $B_{2R(D)}(u)$ is precompact, there exists an accumulation point $x$ for $\{x_\beta\}_{\beta\in\mathcal B}$. As $x\in V_u^{(-\delta/2)}$, there exists $\beta_0\in\mathcal B$ such that $x\in V_u^\beta$ for any $\beta\geq \beta_0$. As $V_u^{\beta_0}$ is an open set, it (and all greater sets) contains a neighborhood of $x$, hence infinitely many $x_\beta$, $\beta\in\mathcal B$. This contradicts $x_\beta\notin V_u^\beta$.

Taking $\delta=\frac{1}{n}$, $n\in\mathbb N$, we may find $\beta_n\in\mathcal B$ such that $V_u^{(-\frac{1}{n})}\subset V_u^{\beta_n}\subset V_u$. Then $V_u=\cup_{n\in\mathbb N}V_u^{\beta_n}=V_u$. This implies, by sigma-additivity of the measure, that $\lim_{n\to\infty}\mu(V_u^{\beta_n})=\mu(V_u)$. 

Fix $\varepsilon>0$, and find $\beta\in\mathcal B$ such that $\mu(V_u\setminus V_u^\beta)<\frac{\varepsilon}{2}$. Then find $\delta$ such that $\mu(V_u^{\beta}\setminus (V_u^\beta)^{(-\delta)})<\frac{\varepsilon}{2}$. As $(V_u^\beta)^{(-\delta)}\subset V_u^{(-\delta)}$, we have $V_u\setminus V_u^{-\delta}\subset V_u\setminus (V_u^\beta)^{(-\delta)}$, therefore,
$$
\mu(V_u\setminus V_u^{-\delta})\leq\mu(V_u\setminus (V_u^\beta)^{(-\delta)})=\mu(V_u\setminus V_u^\beta)+\mu(V_u^\beta\setminus(V_u^\beta)^{(-\delta)})<\frac{\varepsilon}{2}+\frac{\varepsilon}{2}=\varepsilon.
$$
Thus, $V\in\mathcal A$. 

By Zorn Lemma, $\mathcal A$ has a maximal element. Denote it still by $V$. Clearly, $V$ satisfies (V1), (V3), and the first part of (V2). To see that it satifies the second part of (V2), note that if there exists a point $x\in X$ such that $x\notin\cup_{u\in D}\overline{V}_u$ then there exists $c>0$ such that $B_c(x)\cap(\cup_{u\in D}\overline{V}_u)=\emptyset$. Then we can find $u\in D$ such that $d(x,u)<R(D)$, and replace $V_u$ by $V_u\cup B_c(x)$, which is greater than $V_u$ --- a contradiction with maximality of $V$.

As $L^2(B_{r(D)/2}(u))\subset L^2(V_u)$, it suffices to show that $\dim L^2(B_{r(D)}(u))=\infty$. As $X$ has no isolated points, there exists a sequence $\{x_k\}_{k\in\mathbb N}$ such that $u=\lim_{k\to\infty}x_k$. For each $k\in N$ there exists $\varepsilon_k>0$ such that $B_{\varepsilon_k}(x_k)\cap\{x_j:j\in\mathbb N\}=\{x_k\}$. Then $\mu(B_{\varepsilon_k})>0$, hence $L^2(B_{\varepsilon_k})\neq 0$ for each $k\in N$, hence $L^2(B_{r(D)/2}(u))\supset\oplus_{k\in\mathbb N}L^2(B_{\varepsilon_k}(x_k))$ is infinitedimensional, and (V4) holds.
\end{proof}

Note that (V3) implies that $\mu(\overline{V}_u\cap\overline{V}_v)=0$ for any $u\neq v\in D$, and that  (V2)  implies that $L^2(X)=\oplus_{u\in D}L^2(V_u)$.

Fix $D=D_1$, let $\{V_u\}_{u\in D}$ be a partition of $X$ satisfying (V1)-(V3), and let $\{\phi_v\}_{v\in D_n}$ be the partition of unity contructed in Lemma \ref{L-pu}. Write $v\prec u$ if $\supp\phi_v^{D_n}\subset V_u$. Let $Q_u^{D_n}$ be the projection onto the span of functions $\phi_v^{D_n}$, $v\prec u$. Then the range of $Q_u^{D_n}$ lies in $L^2(V_u)$.

\begin{lem}
Let $\{D_n\}_{n\in\mathbb N}$ be a sequence of points in $\mathcal D_F$ convergent to $X$, and let $\{V_u\}_{u\in D_1}$ be a partition of $X$ satisfying (V1)-(V3). Then the sequence $\{Q_u^{D_n}\}_{n\in\mathbb N}$ is strongly convergent to the identity operator on $L^2(V_u)$ for any $u\in D_1$. 

\end{lem}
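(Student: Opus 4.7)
The plan is to reduce to a dense subspace of $L^2(V_u)$ and then mimic the approximation argument from Lemma \ref{stronglimit}. Since $Q_u^{D_n}$ has norm $1$, a standard $3\varepsilon$-argument shows that it suffices to prove $\|Q_u^{D_n}g-g\|_{L^2(X)}\to 0$ for every $g$ in a dense subspace of $L^2(V_u)$. I will take this subspace to consist of continuous functions on $X$ whose support is a compact subset of the open set $V_u$; this is dense in $L^2(V_u)$ by the usual Radon-measure argument (which already underlies Lemma \ref{stronglimit}).

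Fix such a $g$ and set $K=\supp g$. Since $K$ is compact and $X\setminus V_u$ is a closed set disjoint from $K$, the quantity $\delta_0:=d(K,X\setminus V_u)$ is strictly positive. By Lemma \ref{r-R}, $R(D_n)\to 0$, so $2R(D_n)<\delta_0$ for all sufficiently large $n$. For every $v\in D_n\cap K$ the support estimate (p1) then yields
$$
\supp\phi_v^{D_n}\subset B_{2R(D_n)}(v)\subset V_u,
$$
so $v\prec u$ and $\phi_v^{D_n}$ belongs to the range of $Q_u^{D_n}$.

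Now I mimic Lemma \ref{stronglimit}: set $g_n=\sum_{v\in D_n}g(v)\phi_v^{D_n}$. Because $g$ vanishes off $K$, only the indices $v\in D_n\cap K$ actually contribute to the sum, so by the previous paragraph $g_n$ belongs to the range of $Q_u^{D_n}$ for all sufficiently large $n$. The same uniform-continuity calculation used in the proof of Lemma \ref{stronglimit} (applying $\sum_v\phi_v^{D_n}\equiv 1$ and $2R(D_n)\to 0$ together with a modulus of continuity of $g$) delivers $\|g-g_n\|_{L^2(X)}\to 0$. Since $Q_u^{D_n}g_n=g_n$, I conclude
$$
\|Q_u^{D_n}g-g\|\leq\|Q_u^{D_n}(g-g_n)\|+\|g_n-g\|\leq 2\|g-g_n\|\to 0.
$$

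The main obstacle is the step where I need $g_n$ to lie in the range of $Q_u^{D_n}$ rather than merely in the larger subspace $H^{D_n}$ of Lemma \ref{stronglimit}. This is what forces the restriction to compactly supported test functions: a uniform buffer $\delta_0>0$ between $\supp g$ and $X\setminus V_u$, combined with $R(D_n)\to 0$, ensures that every partition-of-unity bump $\phi_v^{D_n}$ charged by $g$ eventually fits inside $V_u$. Property (V3) plays no role here; only the openness of $V_u$ and the ball bounds in (V1), together with Lemma \ref{r-R}, are used.
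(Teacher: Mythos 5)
Your proof is correct, but it takes a genuinely different route from the paper's. The paper tests strong convergence on bounded continuous functions on $V_u$, which need not vanish near $\partial V_u$; it must therefore split the $L^2$-norm into the contribution over the shrunken set $V_u^{(-\delta_n)}$ (handled by the uniform sup-norm estimate of Lemma \ref{stronglimit}) and the contribution over the collar $V_u\setminus V_u^{(-\delta_n)}$, which is killed using property (V3), i.e.\ that $\mu(V_u\setminus V_u^{(-\delta)})$ can be made small. You instead test on $C_c(V_u)$, so that $\supp g$ sits at a fixed positive distance $\delta_0$ from $X\setminus V_u$; once $2R(D_n)<\delta_0$ every bump $\phi_v^{D_n}$ charged by $g$ has support inside $V_u$, so $g_n$ lies in the range of $Q_u^{D_n}$ outright and no collar estimate is needed. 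This is a cleaner and slightly more general argument: it shows (V3) is not actually required for this particular lemma, only openness of $V_u$, the support bound (p1), and Lemma \ref{r-R} (the role of (V3) in the paper is really to guarantee $\mu(\overline V_u\cap\overline V_v)=0$ and hence the decomposition $L^2(X)=\oplus_u L^2(V_u)$, which both proofs presuppose). The one point you should make explicit is the density of $C_c(V_u)$ in $L^2(V_u)$: this holds because $X$ is proper (hence locally compact and $\sigma$-compact) and bounded geometry makes $\mu$ finite on balls, so $\mu$ is a Radon measure and the standard density theorem applies; this is the same level of justification the paper implicitly invokes for its own dense class. With that noted, your $3\varepsilon$-reduction, the inclusion $\supp\phi_v^{D_n}\subset B_{2R(D_n)}(v)\subset V_u$ for $v\in D_n\cap\supp g$, and the final estimate $\|Q_u^{D_n}g-g\|\leq 2\|g-g_n\|$ are all sound.
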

\begin{proof}
Let $u\in D_1$, $f$ a bounded continuous function on $V_u$,  $|f(x)|\leq M$, $x\in V_u$. Let $\delta_n=2R(D_n)$. By Lemma \ref{r-R}, $\lim_{n\to\infty}\delta_n=0$.  If $x\in V_u^{(-\delta_n)}$ and if $v\in D_n\cap X\setminus V_u^{(-\delta_n)}$ then $\phi_v^{D_n}(x)=0$. Therefore, as in the proof of Lemma \ref{stronglimit}, 
$$
\lim_{n\to\infty}\sup_{x\in V_u^{(-\delta_n)}}|f(x)-\sum_{v\prec u}f(v)\phi_v^{D_n}(x)|=0.
$$
If $x\in V_u\setminus V_u^{(-\delta_n)}$ then
$$
|f(x)-\sum_{v\prec u}f(v)\phi_v^{D_n}(x)|\leq 2M,
$$
hence $\|f-\sum_{v\in D_n}f(v)\phi_v^{D_n}\|_{L^2(V_u\setminus V_u^{(-\delta_n)})}\leq 2M\mu(V_u\setminus V_u^{(-\delta_n)})$, and it vanishes as $n\to\infty$.
Thus, as
\begin{eqnarray*}
\|f-\sum_{v\in D_n}f(v)\phi_v^{D_n}\|_{L^2(V_u)}&=&\|f-\sum_{v\in D_n}f(v)\phi_v^{D_n}\|_{L^2(V_u^{(-\delta_n)})}\\
&+&\|f-\sum_{v\in D_n}f(v)\phi_v^{D_n}\|_{L^2(V_u\setminus V_u^{(-\delta_n)})},
\end{eqnarray*}
and both summands in the right hand side vanish as $n\to\infty$, so the sum vanishes too. As $\sum_{v\prec u}f(v)\phi_v^{D_n}$ lies in the range of $Q_u^{D_n}$, 
$$
\|f-Q_u^{D_n}f\|_{L^2(V_u)}\leq \|f-\sum_{v\in D_n}f(v)\phi_v^{D_n}\|_{L^2(V_u)}.
$$

As bounded continuous functions are dense in $L^2(V_u)$, we have $\lim_{n\to\infty}\|f-Q_u^{D_n}f\|_{L^2(V_u)}=0$ for any $f\in L^2(V_u)$.
\end{proof}

\section{Union of ranges of $\alpha^D$}\label{S6}

Let $\{D_n\}_{n\in\mathbb N}$ be a sequence of points in $\mathcal D_F$ convergent to $X$, $D=D_1$, let $\{\phi_v^{D_n}\}_{v\in D_n}$ be a partition of unity constructed in Lemma \ref{L-pu}. For $u\in D$ let $H_u^n$ denote the closure of the span of the functions $\phi_v^{D_n}$, $v\prec u$ (i.e. $\supp\phi_v^{D_n}\subset V_u$).
Set $m^n_u=\dim H_u^n$, $m^n=\min\{m^n_u:u\in D\}$.

\begin{lem}\label{dim}
One has $m(n)=\sup_{u\in D}m^n_u<\infty$ for each $n\in\mathbb N$, and $\lim_{n\to\infty}m^n=\infty$.

\end{lem}
\begin{proof}
Clearly, $m^n_u=\#\{v\in D_n, v\prec u\}$. By (V1), there exists $C>0$ such that $V_u\subset B_C(u)$ for any $u\in D$.   
By Lemma \ref{boundedgeometry}, $D_n$ is of bounded geometry, hence the number of points of $D_n\cap B_C(u)$ is bounded uniformly in $u$, and the first claim follows. 

For the second claim, suppose that there exists $N\in\mathbb N$ such that $m^n\leq N$ for infinitely many $n$'s. As $\{D_n\}_{n\in\mathbb N}$ is convergent to $X$, we have $\lim_{n\to\infty}R(D_n)=0$, hence, by (V1), $\lim_{n\to\infty}\mu(B_{2R(D_n)}(v))=0$ uniformly in $v$. Note that $\cup_{v\prec u}B_{2R(D_n)}(v)\supset B_{C}(u)$, and there exists $c>0$ such that $\mu(B_C(u))>c$ for any $u\in D$. Therefore,
\begin{equation}\label{measures}
0<c<\mu(B_C(u))\leq \sum_{v\prec u}\mu(B_{2R(D_n)}(v))\leq N\sup_{v\in D_n}\mu(B_{2R(D_n)}(v)).
\end{equation}
As the right hand side in (\ref{measures}) vanishes as $n\to\infty$, we get a contradiction, i.e. the second claim holds true. 
\end{proof} 

\begin{cor}\label{Corollary-asymptotic}
Let $T\in C_k^*(X)$. Then $\lim_{n\to\infty}\alpha^{D_n}\beta^{D_n}(T)-T=0$.

\end{cor}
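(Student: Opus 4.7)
My plan is to reduce the statement to a dense $*$-subalgebra and exploit the block decomposition $L^2(X)=\bigoplus_{u\in D}L^2(V_u)$ from Section~\ref{S5}. Since each $\alpha^{D_n}\beta^{D_n}(T)=P_{D_n}TP_{D_n}$ is contractive in $T$, an $\varepsilon/3$-argument reduces matters to $T$ in the dense $*$-subalgebra of uniformly locally compact, finite-propagation operators. For such $T$ the telescoping identity $\alpha^{D_n}\beta^{D_n}(T)-T = -(I-P_{D_n})TP_{D_n}-T(I-P_{D_n})$ reduces the problem to showing $\|(I-P_{D_n})T\|\to 0$ (together with its adjoint version). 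The key auxiliary object is $Q^{D_n}:=\sum_{u\in D}Q_u^{D_n}$: its summands have pairwise orthogonal ranges (living in distinct $L^2(V_u)$'s), all contained in $H^{D_n}$, so $Q^{D_n}$ is a projection with $Q^{D_n}\leq P_{D_n}$, and the monotonicity inequality for ordered projections gives $\|(I-P_{D_n})T\|\leq\|(I-Q^{D_n})T\|$.

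Let $E_u$ denote the projection of $L^2(X)$ onto $L^2(V_u)$, and write $T_{uv}=E_u T E_v$. Finite propagation forces $T_{uv}=0$ whenever $d(u,v)$ exceeds the propagation radius plus $4R(D_1)$, and Lemma~\ref{boundedgeometry} bounds the number of surviving neighbors of any $u$ (and dually of any $v$) by a constant $M$ independent of $u$. Combining the orthogonal decomposition $I-Q^{D_n}=\sum_u(E_u-Q_u^{D_n})$ with Cauchy--Schwarz yields
$$
\|(I-Q^{D_n})T\|\;\leq\;M\,\sup_{u,v\in D}\|(E_u-Q_u^{D_n})\,T_{uv}\|.
$$
Uniform local compactness of $T$ then supplies, for any $\varepsilon>0$, some $N$ with $\|T_{uv}-\tilde\zeta_u(p_N)T_{uv}\tilde\zeta_v(p_N)\|<\varepsilon$ for all $u,v$, reducing the task to proving that $\sup_{u\in D}\|(E_u-Q_u^{D_n})\,\tilde\zeta_u(p_N)\|\to 0$ as $n\to\infty$ for each fixed $N$.

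This last uniform-in-$u$ estimate is the main obstacle. For each individual $u$ it follows at once from the strong convergence $Q_u^{D_n}\to E_u$ on $L^2(V_u)$ established in Section~\ref{S5}, together with the finite rank of $\tilde\zeta_u(p_N)$; the difficulty is the uniformity in $u$. I would obtain it by quantitatively refining the proof of that lemma: its key ingredients --- control of the collar measure $\mu(V_u\setminus V_u^{(-\delta)})$ via (V3) and the rate of partition-of-unity approximation of continuous functions --- can be made uniform in $u$ by invoking bounded geometry of $(X,d,\mu)$ (all cells $V_u$ lie inside fixed-radius balls $B_{2R(D_1)}(u)$) together with the uniform lower bound $m^n\to\infty$ from Lemma~\ref{dim}. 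Once this uniformity is secured, the chain of inequalities above assembles to $\lim_{n\to\infty}\|\alpha^{D_n}\beta^{D_n}(T)-T\|=0$.
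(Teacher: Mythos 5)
Your reduction is sound up to its last step: the $\varepsilon/3$-argument on a dense subalgebra, the telescoping identity, the monotonicity bound $\|(I-P_{D_n})T\|\leq\|(I-Q^{D_n})T\|$, and the block estimate coming from finite propagation together with Lemma \ref{boundedgeometry} are all correct. But the step you yourself flag as ``the main obstacle'' --- proving $\sup_{u\in D}\|(E_u-Q_u^{D_n})\tilde\zeta_u(p_N)\|\to 0$ --- is a genuine gap, and the route you propose for closing it is unlikely to succeed. The isometries $\zeta_u:l^2(\mathbb N)\to L^2(V_u)$ of Section \ref{S2} are arbitrary, so the range of $\tilde\zeta_u(p_N)$ is spanned by $N$ arbitrary unit vectors of $L^2(V_u)$. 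No quantitative refinement of the strong-convergence lemma of Section \ref{S5} controls, uniformly in $u$, how well such vectors are approximated by the span of $\{\phi_v^{D_n}:v\prec u\}$: the vectors $\tilde\zeta_u(e_i)$ may oscillate arbitrarily fast or concentrate on the collar $V_u\setminus V_u^{(-\delta)}$, and neither (V3) nor Lemma \ref{dim} (which only counts dimensions) excludes this. Strong convergence $Q_u^{D_n}\to E_u$ gives your estimate for each fixed $u$ but provides no rate, so the uniformity you need is simply not available from the ingredients you cite.

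The paper removes this obstacle by a choice rather than an estimate: the isometries are taken so that $\tilde\zeta_u(p_{m_u^n})=Q_u^{D_n}$, where $m_u^n=\dim H_u^n$. With this alignment one has $\tilde\zeta_u(p_N)\leq Q_u^{D_n}$ as soon as $N\leq m_u^n$, hence $(E_u-Q_u^{D_n})\tilde\zeta_u(p_N)=0$ for every $u$ once $n$ is large enough that $m^n=\min_u m_u^n>N$ --- and $m^n\to\infty$ is exactly the content of Lemma \ref{dim}. Equivalently, the paper's proof runs through the sandwich $P_{m^n}\leq Q^n\leq P_{D_n}$ combined with $\lim_{m\to\infty}\|T-P_mTP_m\|=0$ for $T\in C^*_k(X)$; this is your argument with the hard step eliminated at the source. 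If you incorporate this choice of $\zeta_u$, your outline goes through (and your final supremum is identically zero for large $n$); without it, the proof is incomplete.
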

\begin{proof}
Let $m\in\mathbb N$, $P_m=\oplus_{u\in D}\tilde\zeta_u(p_m)$. If $T\in C^*_k(X)$ then $\lim_{m\to\infty} T-P_mTP_m=0$.

Let $Q_u^n$ denote the projection in $L^2(V_u)$ onto $H_u^n$, and let $\zeta_u:l^2(\mathbb Z)\to L^2(V_u)$ satisfy $\tilde\zeta_u(p_{m_u^n})=Q_u^n$. Set $Q^n=\oplus_{u\in D}Q^n_u$. Then 
$$
T\geq \alpha^{D_n}\beta^{D_n}(T)=Q^n\alpha^{D_n}(T)Q^n\geq P_{m^n}\alpha^{D_n}(T)P_{m^n}. 
$$
By Lemma \ref{dim}, $\lim_{n\to\infty}m^n=\infty$, so $\lim_{n\to\infty}T-P_{m^n}\alpha^{D_n}(T)P_{m^n}=0$, hence the claim follows.

\end{proof}

\begin{lem}
Let $\{D_n\}_{n\in\mathbb N}$ be a sequence of Delone sets in $\mathcal D_F$ convergent to $X$, and let $\{V_u\}_{u\in D_1}$ be a partition of $X$ satisfying (V1)-(V4). Then $\cup_{n\in\mathbb N}\alpha^{D_n}(C^*_u(D_n))$ is dense in $C^*_k(X)$.

\end{lem}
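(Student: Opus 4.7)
The plan is to use Corollary \ref{Corollary-asymptotic} directly, since all the essential work has already been done there. What the statement asks is that every $T \in C_k^*(X)$ lie in the norm closure of $\bigcup_n \alpha^{D_n}(C^*_u(D_n))$, and the corollary essentially packages this already.

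More concretely, fix $T \in C_k^*(X)$ and $\varepsilon > 0$. Since $C^*_k(X) \subset C^*(X)$, the map $\beta^{D_n}$ is defined on $T$, and by the Lemma of Section \ref{S4} its range lies in $C^*_u(D_n)$. Thus $S_n := \beta^{D_n}(T) \in C^*_u(D_n)$ and $\alpha^{D_n}(S_n) = \alpha^{D_n}\beta^{D_n}(T) \in \alpha^{D_n}(C^*_u(D_n))$. By Corollary \ref{Corollary-asymptotic}, $\lim_{n\to\infty}\|T - \alpha^{D_n}\beta^{D_n}(T)\| = 0$, so for $n$ sufficiently large $\|T - \alpha^{D_n}(S_n)\| < \varepsilon$. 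This shows $T$ lies in the closure of $\bigcup_{n\in\mathbb N}\alpha^{D_n}(C^*_u(D_n))$, proving density.

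There is essentially no obstacle in this final step — the real work is hidden in the preceding results: Lemma \ref{dim} gives $m^n \to \infty$, which combined with the characterization of $C_k^*(X)$ via uniform approximation by finite-rank truncations $P_m T P_m$ yields the asymptotic identity in Corollary \ref{Corollary-asymptotic}. That Corollary is exactly the bridge that makes the present Lemma immediate. One should simply remark that the use of $C_k^*(X)$ (rather than $C^*(X)$) is essential here: on a general element of $C^*(X)$ the truncations $P_m T P_m$ need not converge in norm, and only the \u Spakula condition makes the approximation uniform in the block index $u \in D$.
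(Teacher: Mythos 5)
Your approximation argument is correct and is, in substance, the same as the paper's: the paper also takes $S=\beta^{D_n}(T)$ as the approximant, and shows $\alpha^{D_n}(S)=Q^nTQ^n=T$ exactly for every $T$ satisfying $P_mTP_m=T$ (a dense set of operators) once $m^n>m$ --- which is just the unpackaged form of Corollary \ref{Corollary-asymptotic}. Citing that Corollary directly, as you do, is legitimate, since it is established beforehand and does not depend on the present Lemma; your closing remark about why $C^*_k(X)$ rather than $C^*(X)$ is needed is also accurate.

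There is, however, one omission. The assertion that $\cup_{n\in\mathbb N}\alpha^{D_n}(C^*_u(D_n))$ is dense \emph{in} $C^*_k(X)$ contains two claims: that the union is contained in $C^*_k(X)$, and that its closure is all of $C^*_k(X)$. You prove only the second. The first is not automatic --- a priori $\alpha^{D_n}$ only lands in $C^*(X)$ --- and the paper devotes the first part of its proof to it: for $T\in C^*_u(D_n)$ one has $\alpha^{D_n}(T)=Q^n\alpha^{D_n}(T)Q^n=P_{m(n)}\alpha^{D_n}(T)P_{m(n)}$, and since $m(n)=\sup_{u\in D}m^n_u<\infty$ by the first half of Lemma \ref{dim}, every block of $\alpha^{D_n}(T)$ has rank at most $m(n)$ uniformly in $u,v\in D$, so $\alpha^{D_n}(T)$ is uniformly locally compact and lies in $C^*_k(X)$. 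This is a short step, but it uses the finiteness statement of Lemma \ref{dim} (not merely $m^n\to\infty$), and without it the Lemma as stated is not fully proved.
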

\begin{proof}
If $T\in C^*_u(D_n)$ then $\alpha^{D_n}(T)=Q^n\alpha^{D_n}(T)Q^n=P_{m^n}\alpha^{D_n}(T)P_{m^n}$, where $P_m=\oplus_{u\in D}\tilde\zeta_u(p_m)$, hence $\alpha^{D_n}(T)\in C^*_k(X)$.

To show that the union of the ranges of $\alpha^{D_n}$ is dense in $C^*_k(X)$, let $m\in\mathbb N$, and let $T\in C^*_k(X)$ satisfy $T_{uv}=\tilde\zeta_u(p_m)T_{uv}\tilde\zeta_v(p_m)$ for any $u,v\in D$, i.e. $P_mTP_m=T$, where $P_m=\oplus_{u\in D}\tilde\zeta_u(p_m)$. Such operators are dense in $C^*_k(X)$.
 
Set $S=\beta^{D_n}(T)\in C^*_u(D_n)$, then $\alpha^{D_n}(S)=\alpha^{D_n}(\beta^{D_n}(T))=Q^nTQ^n$. If $m^n>m$ then $m_u^n>m$ for any $u\in D$, therefore $Q_u^n\geq \tilde\zeta_u(p_m)$. Then we have $Q_u^nT_{uv}Q_v^n=\tilde\zeta_u(p_m)T_{uv}\tilde\zeta_v(p_m)=T_{uv}$, hence $Q^nTQ^n=P_mTP_m=T$ i.e. $\alpha^{D_n}(S)=T$. 
\end{proof}

\section{Continuous field of Roe algebras}\label{S7}

Consider the compact metric space $\overline{\mathcal D}_F$. Over each `finite' point $D\in \overline{\mathcal D}_F$ we have the uniform Roe algebra $C^*_u(D)$, and over the `infinite' point $X\in \overline{\mathcal D}_F$ we have the Roe algebra $C^*(X)$ and, also, its smaller version $C^*_k(X)$ by \u Spakula. 

We don't know how to organize this tautological family of Roe algebras into a continuous field of $C^*$-algebras over $\overline{\mathcal D}_F$, but this becomes possible if we restrict ourselves to any sequence in $\overline{\mathcal D}_F$ that converges to $X$.

Recall that a continuous field of $C^*$-algebras over a locally compact Hausdorff space $T$ is a triple $(T,A,\pi_t:A\to A_t)$, where $A$ and $A_t$, $t\in A$, are $C^*$-algebras, the $*$-homomorphisms $\pi_t$ are surjective, the family $\{\pi_t\}_{t\in T}$ is faithful, and the map $t\mapsto\|\pi_t(a)\|$ is continuous for any $a\in A$ \cite{Fell}.

Let $\{D_n\}_{n\in\mathbb N}$ be a sequence of points in $\mathcal D_F$ such that $\lim_{n\to\infty}D_n=X$.
Set $T=\{D_n:n\in\mathbb N\}\cup\{X\}$. We identify $T$ with $\mathbb N\cup\{\infty\}$ with the metric from $\overline{\mathcal D}_F$. Set $A_n=C^*_u(D_n)$, $A_\infty=C^*_k(X)$. 
Let $F(T)=\prod_{t\in T}A_t$ denote the $C^*$-algebra of families $a=(a_t)_{t\in T}$ such that $a_t\in A_t$ for each $t\in T$, and $\sup_{t\in T}\|a_t\|<\infty$.

Set
$$
J=\{(a_t)_{t\in T}\in F(T): a_\infty=\lim_{t\to\infty}\|a_t\|=0\} \cong\oplus_{t\in\mathbb N}A_t.
$$ 
It is an ideal in $F(T)$.

For $S\in C^*(X)$ set 
$$
b_S(t)=\left\lbrace\begin{array}{cl}\beta^{D_t}(S)&\mbox{if\ }t\neq\infty;\\
S&\mbox{if\ }t=\infty.\end{array}\right.
$$
As $\beta^{D_t}$ is a contraction for any $t\in\mathbb N$, $b_S\in F(T)$.
Let $B$ be a $*$-subspace in $F(T)$ generated by $b_S$, $S\in C^*_k(X)$. 

Set $A=J+B\subset F(T)$.

\begin{lem}\label{Lem:product}
Let $R,S\in C^*_k(X)$. Then $\lim_{n\to\infty}\beta^{D_n}(RS)-\beta^{D_n}(R)\beta^{D_n}(S)=0$, i.e. $b_{RS}-b_R b_S\in J$.

\end{lem}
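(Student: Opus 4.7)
The plan is to reduce the asymptotic multiplicativity of $\beta^{D_n}$ to the asymptotic identity $\alpha^{D_n}\beta^{D_n}\to\id$ already established in Corollary \ref{Corollary-asymptotic}.

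First I would compute the difference directly. Using $\beta^{D_n}(T)=U_{D_n}^*TU_{D_n}$ and $U_{D_n}U_{D_n}^*=P_{D_n}$, one gets
\begin{equation*}
\beta^{D_n}(RS)-\beta^{D_n}(R)\beta^{D_n}(S)=U_{D_n}^*RSU_{D_n}-U_{D_n}^*RP_{D_n}SU_{D_n}=U_{D_n}^*R(I-P_{D_n})SU_{D_n},
\end{equation*}
and since $\|U_{D_n}\|\le 1$, the norm of this expression is bounded by $\|R(I-P_{D_n})S\|$.

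The key algebraic observation is that $\alpha^{D_n}\beta^{D_n}(R)=P_{D_n}RP_{D_n}$, and $(P_{D_n}RP_{D_n})(I-P_{D_n})=0$ because $P_{D_n}(I-P_{D_n})=0$. Hence $R(I-P_{D_n})=\bigl(R-\alpha^{D_n}\beta^{D_n}(R)\bigr)(I-P_{D_n})$, so
\begin{equation*}
\|R(I-P_{D_n})S\|\le\|R-\alpha^{D_n}\beta^{D_n}(R)\|\cdot\|S\|.
\end{equation*}
By Corollary \ref{Corollary-asymptotic} applied to $R\in C^*_k(X)$, the factor $\|R-\alpha^{D_n}\beta^{D_n}(R)\|$ tends to $0$ as $n\to\infty$, which finishes the proof.

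There is essentially no serious obstacle here: once one notices that $\alpha^{D_n}\beta^{D_n}(R)$ is precisely the piece of $R$ that is annihilated on the right by $I-P_{D_n}$, the estimate on $b_{RS}-b_Rb_S$ reduces immediately to the already-proven asymptotic behavior of $\alpha^{D_n}\beta^{D_n}$ on $C^*_k(X)$. The only mild subtlety is the choice of which factor to split: applying the splitting to $R$ (rather than $S$) works because the projection $P_{D_n}$ sits immediately to the right of $R$ in the difference formula, so the contribution of $\alpha^{D_n}\beta^{D_n}(R)$ vanishes exactly.
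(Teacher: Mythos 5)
Your proof is correct and follows the same route as the paper, which simply states that the lemma ``follows directly from Corollary \ref{Corollary-asymptotic}''; your computation $\beta^{D_n}(RS)-\beta^{D_n}(R)\beta^{D_n}(S)=U_{D_n}^*R(I-P_{D_n})SU_{D_n}$ together with the identity $R(I-P_{D_n})=\bigl(R-\alpha^{D_n}\beta^{D_n}(R)\bigr)(I-P_{D_n})$ is exactly the detail that makes that reduction work. Nothing further is needed.
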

\begin{proof}
This follows directly from Corollary \ref{Corollary-asymptotic}.

\end{proof}

\begin{cor}
The linear space $A$ is a $*$-algebra.

\end{cor}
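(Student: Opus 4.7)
The plan is to verify the two defining properties of a $*$-algebra separately, splitting $A=J+B$ and handling each summand.

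Closure under the adjoint is immediate: $J$ is self-adjoint because it is an ideal in the $*$-algebra $F(T)$, and $B$ is a $*$-subspace by construction, so $A^{*}=J^{*}+B^{*}=J+B=A$.

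For closure under multiplication, I expand
$$
(J+B)(J+B)=J\!\cdot\!J+J\!\cdot\!B+B\!\cdot\!J+B\!\cdot\!B.
$$
The first three products all lie in $J$ because $J$ is a two-sided ideal of $F(T)$ and $B\subset F(T)$. Only $B\cdot B$ requires genuine argument. By bilinearity it suffices to show $b_R b_S\in A$ for arbitrary $R,S\in C^{*}_k(X)$, for which I plan to write
$$
b_R b_S=b_{RS}+(b_R b_S-b_{RS}),
$$
where the first term is in $B$ and the second, I claim, is in $J$. To verify this I check coordinatewise: at $t=\infty$ both $b_R b_S$ and $b_{RS}$ take the value $RS$, so the difference vanishes there; at $t\in\mathbb N$ the difference equals $\beta^{D_t}(R)\beta^{D_t}(S)-\beta^{D_t}(RS)$, whose norm tends to $0$ as $t\to\infty$ by Lemma \ref{Lem:product}. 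Together these two facts say precisely that $b_R b_S-b_{RS}\in J$, hence $b_R b_S\in B+J=A$.

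The only substantive ingredient is the invocation of Lemma \ref{Lem:product}; everything else is bookkeeping with the ideal property of $J$, so I do not anticipate any obstacle beyond the already-established asymptotic multiplicativity of the maps $\beta^{D_n}$.
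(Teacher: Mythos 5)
Your proposal is correct and follows exactly the route the paper intends: the corollary is drawn directly from Lemma \ref{Lem:product}, with the ideal property of $J$ and the self-adjointness of $J$ and $B$ handling the remaining cases, just as you have written. No gaps.
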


\begin{lem}
For any $S\in C^*(X)$ one has
$\lim_{n\to\infty}\|\beta^{D_n}(S)\|=\|S\|$.

\end{lem}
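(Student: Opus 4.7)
The plan is to identify $\|\beta^{D_n}(S)\|$ with the norm of a compression of $S$ by projections converging strongly to the identity, and then invoke lower semicontinuity of the operator norm.

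First, I would observe that the isometry $U_{D_n}:l^2(D_n)\to L^2(X)$ satisfies $U_{D_n}U_{D_n}^*=P_{D_n}$, the orthogonal projection onto the subspace $H^{D_n}\subset L^2(X)$. For any $\xi\in l^2(D_n)$, the identity $\|U_{D_n}^*SU_{D_n}\xi\|=\|U_{D_n}U_{D_n}^*SU_{D_n}\xi\|=\|P_{D_n}SU_{D_n}\xi\|$ combined with $U_{D_n}\xi\in H^{D_n}=P_{D_n}L^2(X)$ yields
\begin{equation*}
\|\beta^{D_n}(S)\|=\|U_{D_n}^*SU_{D_n}\|=\|P_{D_n}SP_{D_n}\|.
\end{equation*}
Since each $P_{D_n}$ is a contraction, the upper bound $\|P_{D_n}SP_{D_n}\|\leq\|S\|$ is immediate, giving $\limsup_{n}\|\beta^{D_n}(S)\|\leq\|S\|$.

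For the matching lower bound, I would use Lemma \ref{stronglimit}, which says $P_{D_n}\to I$ strongly on $L^2(X)$. Fix $\xi\in L^2(X)$. From $P_{D_n}\xi\to\xi$ and boundedness of $S$, we get $SP_{D_n}\xi\to S\xi$ in norm. Applying the standard fact that if $\eta_n\to\eta$ in norm and $T_n$ are uniformly bounded with $T_n\to T$ strongly then $T_n\eta_n\to T\eta$ (with $T_n=P_{D_n}$, $T=I$, $\eta_n=SP_{D_n}\xi$), we conclude $P_{D_n}SP_{D_n}\xi\to S\xi$ in norm. Hence for any unit vector $\xi$,
\begin{equation*}
\|S\xi\|=\lim_{n\to\infty}\|P_{D_n}SP_{D_n}\xi\|\leq\liminf_{n\to\infty}\|P_{D_n}SP_{D_n}\|.
\end{equation*}
Taking the supremum over unit vectors gives $\|S\|\leq\liminf_{n\to\infty}\|\beta^{D_n}(S)\|$, and combining with the upper bound completes the proof.

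I do not anticipate a real obstacle here: the hard work has already been done in Lemma \ref{stronglimit}, which established the strong convergence $P_{D_n}\to I$ using the partitions of unity and Lemma \ref{r-R}. The only delicate point is the reduction $\|U_{D_n}^*SU_{D_n}\|=\|P_{D_n}SP_{D_n}\|$, which relies on $U_{D_n}$ being an isometry; after that, lower semicontinuity of the norm under strong convergence does the rest.
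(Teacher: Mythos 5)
Your proof is correct and follows essentially the same route as the paper: identify $\|\beta^{D_n}(S)\|$ with $\|P_{D_n}SP_{D_n}\|$, get the upper bound from $\|P_{D_n}\|=1$, and get the lower bound from the strong convergence $P_{D_n}\to I$ of Lemma \ref{stronglimit} together with lower semicontinuity of the norm. If anything, your $\liminf$/$\limsup$ bookkeeping is slightly cleaner than the paper's appeal to monotonicity of $\|P_{D_n}SP_{D_n}\|$, which is not actually needed.
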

\begin{proof}
Note that $\|\beta^{D_n}(S)\|=\|P_{D_n}SP_{D_n}\|$. As $\|P_{D_n}\|=1$, the sequence $\|P_{D_n}SP_{D_n}\|$ is non-decreasing, and $\|P_{D_n}SP_{D_n}\|\leq \|S\|$. On the other hand, by Lemma \ref{stronglimit}, the strong limit of $P_{D_n}SP_{D_n}$ equals $S$. As the norm is lower semicontinuous, 
$\|S\|\leq\lim_{n\to\infty}\|P_{D_n}SP_{D_n}\|$.
\end{proof}

\begin{cor}
For any $b=(b_t)_{t\in T}\in A$ one has
\begin{equation}\label{120}
\lim_{n\to\infty}\|b_t\|=\|b_\infty\|.
\end{equation}

\end{cor}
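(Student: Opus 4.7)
The plan is to reduce the claim to the preceding lemma by decomposing an arbitrary element of $A$ into a ``tail'' part lying in the ideal $J$ and a ``coherent'' part of the form $b_S$.

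First, I would observe that the map $S\mapsto b_S$ is linear and intertwines adjoints, i.e.\ $b_S^*=b_{S^*}$: this is automatic on the infinite fibre, and on a finite fibre it follows from $\beta^{D_n}(S)=U_{D_n}^*SU_{D_n}$, so that $\beta^{D_n}(S)^*=U_{D_n}^*S^*U_{D_n}=\beta^{D_n}(S^*)$. Therefore the $*$-subspace $B$ coincides, as a set, with $\{b_S : S\in C_k^*(X)\}$, and every $b\in A$ can be written as $b=c+b_S$ for some $c\in J$ and some $S\in C_k^*(X)$. This decomposition is unique: evaluating at $\infty$ gives $b_\infty=0+S=S$, which pins down $S$, and then $c=b-b_S$ is determined.

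Second, with the decomposition fixed, for each finite $n$ I write $b_n=c_n+\beta^{D_n}(S)$. The reverse triangle inequality gives
\[
\bigl|\|b_n\|-\|\beta^{D_n}(S)\|\bigr|\leq\|c_n\|,
\]
and the right-hand side tends to $0$ because $c\in J$. The preceding lemma supplies $\|\beta^{D_n}(S)\|\to\|S\|$, and since $\|b_\infty\|=\|S\|$, we conclude $\lim_{n\to\infty}\|b_n\|=\|b_\infty\|$, which is exactly (\ref{120}).

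There is no serious obstacle here: the argument is a direct combination of the reverse triangle inequality with the preceding lemma. The only bookkeeping point worth verifying carefully is that the $*$-subspace $B$ generated by the $b_S$'s coincides with the plain set $\{b_S:S\in C_k^*(X)\}$, so that the decomposition $b=c+b_S$ is available for an individual element of $A$ rather than only for finite sums of such; this is handled by the linearity and adjoint-compatibility of $S\mapsto b_S$ noted above.
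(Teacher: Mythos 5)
Your proof is correct and is exactly the argument the paper intends (the corollary is stated without a written proof, but it follows from the preceding lemma precisely via the decomposition $b=c+b_S$ with $c\in J$ and the reverse triangle inequality, as you do). Your care in checking that $B=\{b_S:S\in C_k^*(X)\}$ as a set and that the decomposition is unique is a welcome bit of bookkeeping that the paper only asserts implicitly.
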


\begin{lem}
The $*$-algebra $A$ is norm closed.

\end{lem}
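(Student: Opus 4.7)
The plan is to use the evaluation map at $\infty$ to canonically isolate the $b_S$-component of any element of $A$, which will reduce the closedness statement to an elementary triangle-inequality estimate. The key observation is that the decomposition $a = j + b_S$ ($j \in J$, $S \in C^*_k(X)$) is actually unique: evaluating at $\infty$ forces $S = a_\infty$. This lets me recover both components in a continuous way from $a$ itself.

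First, I take a Cauchy sequence $(a^{(k)})_{k\in\mathbb{N}}$ in $A$. Since the ambient $F(T)$ is a $C^*$-algebra and hence complete, $a^{(k)}$ converges in sup-norm to some $a \in F(T)$, and the task is to show $a \in A$. Writing each $a^{(k)} = j^{(k)} + b_{S_k}$ with $j^{(k)} \in J$ and $S_k \in C^*_k(X)$, evaluation at $\infty$ yields $a^{(k)}_\infty = S_k$, so from $\|a^{(k)} - a\| \to 0$ I conclude that $(S_k)$ is Cauchy in $C^*_k(X)$ and converges in norm to $S := a_\infty \in C^*_k(X)$.

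Next I show $a - b_S \in J$, which will place $a = (a - b_S) + b_S$ inside $J + B = A$. Vanishing at $\infty$ is immediate: $(a - b_S)_\infty = a_\infty - S = 0$. For the decay of the norm at finite $t$, fix $\varepsilon > 0$, choose $k$ large enough that $\|a - a^{(k)}\| < \varepsilon/3$ and $\|S_k - S\| < \varepsilon/3$, and estimate, using the contractivity of $\beta^{D_t}$,
$$
\|(a - b_S)_t\| \leq \|a_t - a^{(k)}_t\| + \|j^{(k)}_t\| + \|\beta^{D_t}(S_k - S)\| \leq \tfrac{2\varepsilon}{3} + \|j^{(k)}_t\|.
$$
Since $j^{(k)} \in J$, the last term tends to $0$ as $t \to \infty$, so $\|(a - b_S)_t\| < \varepsilon$ for $t$ sufficiently large, giving $a - b_S \in J$.

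I do not anticipate a genuine obstacle. Everything reduces to completeness of $F(T)$, contractivity of $\beta^{D_t}$, and the fact that evaluation at $\infty$ picks out the $B$-component uniquely. In particular, neither Lemma \ref{Lem:product} nor the asymptotic identity of Corollary \ref{Corollary-asymptotic} is needed for this closedness step; those results were used earlier to make $A$ into a $*$-algebra, not to close it in norm.
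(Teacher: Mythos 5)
Your proof is correct, and it reaches the conclusion by a somewhat more elementary route than the paper. The paper passes to the quotient $\overline A/J$ and establishes that the coset norm dominates the norm at infinity via the chain $\|b+J\|=\inf_{a\in J}\|a+b\|\geq\lim_{t\to\infty}\|a_t+b_t\|=\|b_\infty\|$, which invokes the corollary $\lim_{t\to\infty}\|b_t\|=\|b_\infty\|$ (itself resting on the strong convergence of the projections $P_{D_n}$ to the identity); it then deduces that the $B$-components and the $J$-components of the Cauchy sequence converge separately. You instead observe that evaluation at $\infty$ is a contractive linear map on $F(T)$ annihilating $J$, which identifies the $B$-component of the limit as $b_{a_\infty}$ outright, and you then verify $a-b_{a_\infty}\in J$ by a direct $\varepsilon/3$ estimate using only the contractivity of $\beta^{D_t}$. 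The two arguments share the same underlying idea --- the algebraic direct sum $J\oplus B$ is topological because the projection onto $B$ is bounded --- but your version needs only the trivial inequality $\|b_\infty\|\leq\sup_{t\in T}\|b_t\|$ rather than the nontrivial limit formula, so it decouples the closedness of $A$ from the analytic input of Lemma \ref{stronglimit}; the paper's quotient formulation, on the other hand, records the slightly stronger fact that $\|b+J\|=\|b_\infty\|$, i.e.\ that $A/J\cong C^*_k(X)$ isometrically, which is implicitly useful for the continuous-field statement. One small point worth making explicit in your write-up: the identity $a^{(k)}_t-(b_S)_t=j^{(k)}_t+\beta^{D_t}(S_k-S)$ uses the linearity of $S\mapsto b_S$, i.e.\ that $B$ is exactly $\{b_S:S\in C^*_k(X)\}$ and that $b_{S_k}-b_S=b_{S_k-S}$; this is immediate from the linearity of each $\beta^{D_t}$, but it is the reason the uniqueness of the decomposition and your $\varepsilon/3$ estimate go through.
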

\begin{proof}
Consider the norm closure $\overline A$ of $A$ in $F(T)$. Then $J$ is a closed ideal in $\overline A$. Let $\{a^i+b^i\}_{i\in\mathbb N}$, $a^i\in J$, $b^i\in B$, be a Cauchy sequence in $A$. Passing to the quotient $C^*$-algebra $\overline A/J$, the sequence $\{a^i+b^i+J\}=\{b^i+J\}$ is also a Cauchy sequence, as the quotient $*$-homomorphisms have norm 1. Note that
\begin{eqnarray*}
\|b+J\|=\inf_{a\in J}\|a+b\|\geq\lim_{n\to \infty}\|a_n+b_n\|=\lim_{n\to \infty}\|b\|=\|b_\infty\|,
\end{eqnarray*} 
hence $\{(b_\infty)^i\}_{i\in\mathbb N}$ is also a Cauchy sequence. As $A_\infty$ is norm closed, this sequence has a limit in $A_\infty$. But then $\{a^i\}_{i\in\mathbb N}$ is also a Cauchy sequence, and as $J$ is norm closed, its limit lies in $J$. Thus $\{a^i+b^i\}_{i\in\mathbb N}$ converges in $A$, hence $\overline A=A$.
\end{proof}

Let a $*$-homomorphism $\pi_t:A\to A_t$ be determined by $\pi_t(a+\beta(S))=a_t+\beta_t(S)$, $t\in T$. These maps are well defined as $J+B$ is a direct sum.

\begin{thm}
Let $X$ be a proper metric measure space of bounded geometry with no isolated points, and let $D_n$, $n\in\mathbb N$, be a sequence of controlled Delone sets convergent to $X$. Then the constructed above triple $(T,A,\pi_t:A\to A_t)$ is a continuous field of $C^*$-algebras.

\end{thm}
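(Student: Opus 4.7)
The plan is to verify the three defining properties of a continuous field of $C^*$-algebras over $T=\mathbb{N}\cup\{\infty\}$: (i) each $\pi_t:A\to A_t$ is surjective, (ii) the family $\{\pi_t\}_{t\in T}$ is faithful, and (iii) the map $t\mapsto\|\pi_t(a)\|$ is continuous for every $a\in A$. Because $A=J\oplus B$ is a direct sum, every $a\in A$ has a unique decomposition $a=a'+b_S$ with $a'\in J$ and $S\in C^*_k(X)$, and $\pi_t$ is simply evaluation at $t$ in $F(T)$; the $*$-homomorphism property then comes for free from $A$ being a $*$-subalgebra of $F(T)$, a fact which in turn rests on the asymptotic multiplicativity of $\beta^{D_n}$ provided by Lemma~\ref{Lem:product}.

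For surjectivity, at $t=n\in\mathbb{N}$ any $x\in C^*_u(D_n)$ is the image of the element of $F(T)$ supported at $n$ with value $x$, which lies in $J\subset A$; at $t=\infty$ we have $\pi_\infty(b_S)=S$ for every $S\in C^*_k(X)$. Faithfulness is immediate: if $\pi_t(a)=0$ for all $t\in T$ then every coordinate of $a$ vanishes, hence $a=0$ in $F(T)$.

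The continuity condition is the only substantive point. Since $\{D_n\}$ converges to $X$ in $\overline{\mathcal{D}}_F$, the subspace $T$ has only $\infty$ as an accumulation point, so every $n\in\mathbb{N}$ is isolated in $T$ and continuity there is automatic. At $\infty$, writing $a=a'+b_S$, I need
\[
\lim_{n\to\infty}\|a'_n+\beta^{D_n}(S)\|=\|S\|.
\]
Since $a'\in J$ we have $\|a'_n\|\to 0$, so the triangle inequality reduces this to $\lim_n\|\beta^{D_n}(S)\|=\|S\|$, which is exactly the norm-convergence lemma proved just above (obtained from Lemma~\ref{stronglimit} together with lower semicontinuity of the operator norm).

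The result is therefore essentially an assembly step: all of the analytic substance lives in the preceding sections, and the genuine obstacles were overcome there. Lemma~\ref{Lem:product} is what forces $A$ to be an algebra and makes $\pi_\infty$ respect products, while the norm-convergence lemma is what supplies continuity at infinity. With these two ingredients in hand, checking the three continuous-field axioms reduces to unwinding the definitions.
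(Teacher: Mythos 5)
Your proposal is correct and follows essentially the same route as the paper: surjectivity and faithfulness are immediate from the definition of $A=J+B$ as a subalgebra of $F(T)$, and continuity at $\infty$ is exactly the content of the displayed limit $\lim_{n\to\infty}\|b_t\|=\|b_\infty\|$, which the paper derives (as you do) from $\lim_n\|\beta^{D_n}(S)\|=\|S\|$ together with $\|a'_n\|\to 0$ for $a'\in J$. Your write-up merely makes explicit the unwinding that the paper compresses into its corollary, including the observation that finite points of $T$ are isolated so continuity there is automatic.
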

\begin{proof}
Each $\pi_t$ is clearly surjective. If $a\neq a'\in A$ then there exists $t\in T$ (finite or infinite) such that $a_t\neq a'_t$, hence the family $\{\pi_t\}_{t\in T}$ is faithful. Finally, we have to check that the map $t\mapsto \|\pi_t(a)\|$ is continuous at $t=\infty$, which follows from (\ref{120}).  
\end{proof}

%%%%%%%%%%%%%%%%%%%%%%%%%%%%%%%%%%%%%%%%%%

\end{document}